\newtheorem{remark}{Remark}
\title{Reliable Topology for Dynamic Data:\\Mathematical Foundations and Applications}
\author{Chad M. Topaz\thanks{Williams College, Williamstown, MA; and University of Colorado, Boulder, CO (cmt6@williams.edu).}}
\begin{document}

\maketitle

\begin{abstract}
Across many scientific domains, practitioners rely on coarse, discretized summaries to track the evolving structure of complex systems under noise, measurement error, and changing system size. Understanding when such summaries are reliable---and when apparent robustness is illusory---remains a fundamental challenge. Topological data analysis (TDA) provides a case study: Crocker diagrams track the number of topological features across spatial scale and time, and because they are computationally efficient and easy to interpret, they have been widely used for exploratory analysis, bifurcation detection, model selection, and parameter inference. Despite their popularity, Crocker diagrams have lacked rigorous stability guarantees ensuring robustness to small data distortions. We develop a conditional stability theory for Crocker diagrams constructed from evolving point clouds. Our main results include deterministic conditions guaranteeing exact invariance when pairwise distances are well separated from the diagram's discretization thresholds, together with bounds on how much the diagrams can change when these conditions fail. We also establish probabilistic stability guarantees under Gaussian noise and bounds on topological change caused by adding or removing points, scaling linearly with the number of modified points. We illustrate these results using two complementary examples: an analytically tractable breathing polygon model that reveals how stability thresholds depend on geometry, and a feasibility analysis of epithelial cell imaging data showing when bounded-change guarantees provide the appropriate robustness framework. Together, these results reveal a two-tier stability structure for coarse, discretized topological summaries: exact invariance under verifiable geometric separation conditions, and geometry-controlled bounded change otherwise.
\end{abstract}
\begin{keywords}
Topological data analysis, Crocker diagrams, stability, time-varying data, imaging, cell biology
\end{keywords}

\begin{MSCcodes}
55N31, 62R40, 92B05
\end{MSCcodes}

\section{Introduction}

Data with complex geometric structure is now ubiquitous across the sciences, arising in contexts ranging from protein-folding trajectories~\cite{XiaWei2014} and neuronal firing patterns~\cite{Giusti2015} to social networks~\cite{Aktas2019} and atmospheric vortices~\cite{Dorrington2025}. Extracting meaningful insights from such data requires methods that capture essential shape and organization beyond traditional statistical summaries. In dynamic settings, this challenge is amplified: as structures evolve over time, summaries must remain interpretable despite continuous change.

A recurring tension in applied mathematics is that the summaries most useful in practice---coarse, discretized, and computationally efficient---are often those least amenable to classical continuity-based stability analysis. Yet such summaries frequently appear robust in empirical settings. Understanding when this apparent robustness is genuine, and when it is illusory, is a foundational reliability question for dynamic data analysis.

Topological data analysis (TDA) offers a principled, algebraic framework for quantifying data shape through \emph{persistent homology}, which detects features such as connected components, loops, and voids across a continuously varying scale parameter. These features are typically summarized by \emph{persistence diagrams}, which record the birth and death scales of each feature and have revealed hidden structure in neuroscience~\cite{stolz2021topological}, climate science~\cite{Tymochko2020}, finance~\cite{GideaKatz2018}, materials science~\cite{Hiraoka2016}, and medical imaging~\cite{Somasundaram2021}, to name a few areas.

A key theoretical advantage of persistence diagrams is their stability: small perturbations in the input data induce proportionally small changes in the resulting diagrams. The foundational stability theorem of~\cite{CohenSteiner2007} formalizes this principle, bounding diagram perturbations by the size of the underlying data perturbation and providing a rigorous foundation for robust topological inference.

These guarantees have motivated a variety of vector-valued summaries that inherit stability while facilitating statistical analysis and machine learning. \emph{Persistence landscapes}~\cite{Bubenik2015} and \emph{persistence images}~\cite{Adams2017} provide stable finite-dimensional representations that enable downstream quantitative tasks.

For time-varying data, \emph{vineyards} track the continuous evolution of individual persistence-diagram features and enjoy their own stability guarantees~\cite{CohenSteiner2006}. However, vineyard computations scale poorly with data size and temporal resolution. In contrast, simpler summaries such as Betti curves---recording only feature counts---are computationally efficient but theoretically fragile: arbitrarily small perturbations can instantaneously change Betti numbers in worst-case constructions. This tension raises a natural question: why do such coarse summaries often appear robust in practice?

Despite this theoretical vulnerability, coarse summaries remain popular for large, noisy datasets due to their computational efficiency---often orders of magnitude faster than vineyard analysis---and their interpretability. \emph{Crocker diagrams} extend Betti curves to dynamic settings by stacking feature counts over both scale and time, producing two-dimensional heat maps. They serve as a representative example of discretized, count-based summaries that trade fine geometric detail for interpretability and computational efficiency. Similar discretization--threshold effects arise in many count-based summaries of evolving systems. This efficiency has enabled applications to biological swarms~\cite{Ulmer2019}, mesenchymal cell migration~\cite{Nguyen2024}, and bifurcation detection~\cite{Guzel2022}. Empirically, Crocker diagrams often appear robust, yet until now no rigorous stability theory existed to explain this behavior.

A key distinction between Crocker diagrams and classical persistence diagrams is that Crocker diagrams are computed on a fixed, discrete grid of scale values. As we show in this paper, the placement of this grid relative to the collection of pairwise distances---which determines the critical distances at which topological changes occur---plays a central role in stability. When grid values are well separated from these critical distances by a quantifiable margin we call \emph{grid-threshold clearance}, Crocker diagrams can be exactly invariant under perturbations. Crucially, this separation condition is directly computable from the data and the chosen discretization, making it a practical diagnostic rather than a purely asymptotic assumption. In contrast, near-threshold configurations, where many distances cluster around grid values, are inherently less stable. This clearance-based perspective resolves the apparent paradox between empirical robustness and worst-case instability. Our results do not assert that Crocker diagrams are generically more stable than persistence diagrams; rather, they explain why, in discretized settings with sufficient separation, Crocker diagrams can exhibit robust behavior under explicit, checkable conditions. This theoretical gap is particularly consequential for experimental applications, where measurement uncertainty and changing point sets are unavoidable. In biological imaging, for example, localization error affects point positions, while cell division, death, or migration introduce point insertions and deletions over time.

We address this gap in the literature by developing the first comprehensive stability theory for Crocker diagrams. Because these diagrams aggregate discrete feature counts, they lack the continuity underpinning classical stability theorems. Our analysis therefore treats deterministic perturbations, random Gaussian noise, and point insertion/deletion scenarios directly. Through an analytically tractable breathing polygon model and a feasibility analysis for epithelial cell imaging, we demonstrate how theoretical bounds translate into practical diagnostics---distinguishing settings where exact stability can be certified from those where bounded-change guarantees provide the appropriate robustness framework.

A central message of this work is that Crocker-diagram stability is inherently two-tiered: exact invariance is attainable for structured configurations with verifiable geometric separation, while bounded-change guarantees explain robustness in large, noisy, or dynamically evolving systems where such separation cannot be verified.

We derive rigorous, interpretable bounds that clarify when and how Crocker diagrams constructed from point clouds in $\mathbb{R}^d$ remain stable under realistic modeling assumptions:
\begin{itemize}
  \item \textbf{Deterministic perturbations:} exact invariance when point displacements are smaller than half the grid-threshold clearance (the minimum separation between grid thresholds and critical distances), and geometry-controlled bounded change otherwise;
  \item \textbf{Random (Gaussian) noise:} high-probability stability when this separation dominates the noise scale, with explicit concentration bounds accounting for the number of points, scales, and time frames;
  \item \textbf{Point insertions/deletions:} Betti-count changes that grow at most linearly with the number of modified points, controlled by local density rather than global system size.
\end{itemize}

Together, these results place a widely used class of discretized topological summaries on a firm theoretical foundation, complementing their practical utility with a principled understanding of robustness across diverse application domains.

The remainder of the paper is organized as follows. Section~\ref{sec:background} provides background on TDA and Crocker diagrams. Section~\ref{sec:deterministic} establishes deterministic stability bounds, and Section~\ref{sec:breathing-poly} illustrates them using the breathing polygon model. Section~\ref{sec:probabilistic} extends the theory to Gaussian noise, while Section~\ref{sec:point-churn} addresses point insertions and deletions. We conclude with practical recommendations for reliable Crocker-based analysis of evolving systems.

\section{Background on Topological Data Analysis and Crocker Diagrams}
\label{sec:background}

Topological data analysis (TDA) offers a suite of tools to summarize the shape of data. A hallmark feature of many such tools is the stability of their outputs under small perturbations to the input. For example, the classical stability theorem for persistence diagrams states that if two finite point clouds $P$ and $\tilde{P}$ in $\mathbb{R}^d$ are close in Hausdorff distance, then the bottleneck distance between their persistence diagrams $D(P)$ and $D(\tilde{P})$ is also small:
\begin{equation}
\operatorname{Bott}(D(P), D(\tilde{P})) \le 2 \, \operatorname{Haus}(P, \tilde{P}).
\end{equation}
This inequality implies that topological features may shift slightly in their birth and death scales, but cannot appear or disappear abruptly unless they were already short-lived. The underlying reason is that persistent homology is a Lipschitz-continuous functor from filtered simplicial complexes to persistence diagrams.

However, Crocker diagrams are not persistence diagrams. While persistence diagrams track the birth and death of individual topological features across all scales, Crocker diagrams record the aggregate count of topological features---specifically, the Betti number $\beta_k(\varepsilon, t)$---on a rectangular grid of scale-time values. Crocker diagrams do not preserve information about correspondence between features; they record only the total count at each grid point. Because Betti numbers are integer-valued and piecewise-constant, even a small perturbation can cause a jump in value. Consequently, the function $(\varepsilon, t) \mapsto \beta_k(\varepsilon, t)$ is not continuous, and classical stability results do not directly apply. Despite this theoretical challenge, Crocker diagrams often perform well empirically.

To analyze the stability of Crocker diagrams, we must first establish our framework. Let $P(t_i) = \{p_1(t_i), \dots, p_m(t_i)\}$ be a set of $m$ points in $\mathbb{R}^d$ observed at discrete times $t_i$. A Crocker diagram tabulates Betti numbers on a discrete grid of scales and times, giving entries $\beta_k(\widehat{\varepsilon}_j,t_i)$ for $(\widehat{\varepsilon}_j,t_i)\in\{\widehat{\varepsilon}_1,\dots,\widehat{\varepsilon}_{n_\varepsilon}\}\times\{t_1,\dots,t_{n_t}\}$, where $\{\widehat{\varepsilon}_j\}_{j=1}^{n_\varepsilon}$ represents our fixed scale grid for recording Betti numbers. We measure differences between Crocker diagrams using the entrywise $\ell_1$ norm, that is, the sum of absolute differences across all grid cells.

For each scale $\varepsilon \ge 0$, we construct the Vietoris--Rips complex $\operatorname{VR}\bigl(P(t_i),\varepsilon\bigr)$. A \emph{flag complex} is a simplicial complex in which a simplex is included whenever all of its edges are present. The Vietoris--Rips complex is a flag complex in which an edge between two vertices appears precisely when their distance is at most $\varepsilon$; higher-dimensional simplices are then included automatically whenever all their edges are present. The $k$-th Betti number $\beta_k(\varepsilon,t_i)$ counts $k$-dimensional topological features such as connected components ($k=0$), loops ($k=1$), and voids ($k=2$).

To analyze how perturbations of the point cloud affect this Crocker diagram, consider a perturbed configuration $\tilde{P}(t_i)=\{\tilde{p}_1(t_i),\dots,\tilde{p}_m(t_i)\}$ where each point moves by at most $\delta$:
\begin{equation}
\max_{i}\max_{a}\|p_a(t_i)-\tilde{p}_a(t_i)\|\le\delta.
\end{equation}
Here $\delta > 0$ denotes the maximum perturbation magnitude---the largest displacement of any point across all times.

At each fixed time $t_i$, the topology can change only when the scale parameter $\varepsilon$ crosses certain critical values. These critical scales are determined by the collection of pairwise distances among the points: they are the values at which edges appear in the Vietoris--Rips filtration. In typical datasets, some pairs of points may have identical distances, especially in structured configurations like grids or symmetric shapes. Thus, we define the critical scales as the sorted \emph{distinct} pairwise distances at each sampled time $t_i$:
\begin{equation}
0<\varepsilon_1(t_i)<\varepsilon_2(t_i)<\dots<\varepsilon_{M_i}(t_i),
\end{equation}
where each $\varepsilon_r(t_i)$ represents a distinct distance value observed in the data at time $t_i$.

Define the minimal spacing between consecutive critical scales across all sampled times by
\begin{equation}
\Delta = \min_{1\le i\le n_t}\min_{1\le r < M_i}\{\varepsilon_{r+1}(t_i)-\varepsilon_r(t_i)\},
\end{equation}
where $M_i$ is the number of distinct critical distances at time $t_i$.

For our exact-stability results, we fix a Crocker scale grid $\{\widehat{\varepsilon}_j\}_{j=1}^{n_\varepsilon}$ and assume that every grid value $\widehat{\varepsilon}_j$ lies strictly between two consecutive critical distances at \emph{every} sampled time. Formally, for each time $t_i$ and each grid index $j$ there exists $r = r(i,j)$ such that
\begin{equation}
\varepsilon_{r(i,j)}(t_i) < \widehat{\varepsilon}_j < \varepsilon_{r(i,j)+1}(t_i).
\end{equation}
We call this the \emph{in-gap condition}. For bookkeeping convenience, we formally extend the critical distance list to include boundary values: we set $\varepsilon_0(t_i) = 0$ and $\varepsilon_{M_i+1}(t_i) = +\infty$ (or a value exceeding all grid scales); these do not correspond to actual pairwise distances. This extension allows grid values below the smallest positive pairwise distance or above the largest to satisfy the in-gap condition, as is common in practice when probing the $\beta_0 = m$ regime at small scales or the fully-connected regime at large scales.

Because the critical distances $\varepsilon_r(t)$ evolve with time, the index $r(i,j)$ may change from one frame to the next. The in-gap condition is indispensable for exact stability: if a grid value ever coincides with a critical distance, exact stability may fail even when point motions remain small. In typical applications, this constraint is mild because scale grids are usually chosen to probe structure \emph{between} critical transitions rather than at them.

When the in-gap condition fails---for example, when two rows of points are separated by a distance exactly equal to a grid scale---arbitrarily small perturbations can produce extensive topological change. Our exact-stability theory necessarily excludes such degenerate configurations; the bounded-change results we develop later apply more broadly.

\section{Stability to Point Cloud Perturbations}
\label{sec:deterministic}

In this section we develop a complete stability framework for Crocker diagrams subjected to perturbations of the underlying point cloud. Three data-dependent quantities anchor the theory:
\begin{itemize}
  \item the \emph{minimum critical-distance gap} $\Delta$, measuring the spacing between consecutive distinct pairwise distances in the point cloud;
  \item the grid clearance $\Gamma$, defined as the minimum distance between any sampled scale $\varepsilon_j$ and the nearest critical distance; and
  \item a \emph{local-density bound} $\Lambda_\delta(\varepsilon)$ that caps how many points can lie within a given scale $\varepsilon$ of any point, accounting for perturbations of magnitude $\delta$.
\end{itemize}
We first prove an exact-stability criterion: if the perturbation amplitude satisfies $\delta < \Gamma/2$, every Betti number in the diagram remains unchanged. When this condition fails, we derive a combinatorial bound showing that the total change in Betti numbers is limited by $\Lambda_\delta$, the number of perturbed points, and the size of the Crocker grid. As we will show, these results turn familiar experimental or numerical levers into interpretable inequalities. We illustrate the theory with two examples: (i)~an analytic ``breathing-polygon'' model where the clearance can be computed explicitly, and (ii)~migrating epithelial cell sheets, demonstrating how the bounded-change theorem guides experimental design when exact-stability conditions may be difficult to verify.

\subsection{Theoretical Stability Results}
\label{sec:stability_theory}

We begin by analyzing the behavior of the Vietoris--Rips complex under perturbations. Our analysis proceeds in two stages: first establishing when Crocker diagrams remain exactly unchanged despite perturbations, and then bounding the potential changes when exact stability no longer holds.

Consider a point cloud $P(t_i) = \{p_1(t_i), \dots, p_m(t_i)\}$ observed at discrete times $t_i$. Let its perturbed version be $\tilde{P}(t_i) = \{\tilde{p}_1(t_i),\dots,\tilde{p}_m(t_i)\}$ with a bounded perturbation:
\begin{equation}
\max_{i}\max_{a}\|p_a(t_i)-\tilde{p}_a(t_i)\|\le\delta.
\end{equation}

For any pair of points $(a,b)$ at a fixed time $t_i$, the perturbation shifts their pairwise distance by at most $2\delta$, as guaranteed by the triangle inequality:
\begin{equation}
\bigl|\|\tilde{p}_a(t_i)-\tilde{p}_b(t_i)\|-\|p_a(t_i)-p_b(t_i)\|\bigr|\le 2\delta.
\label{eq:distance_shift}
\end{equation}

To state our stability criterion precisely, we must quantify how close the Crocker grid values $\widehat{\varepsilon}_j$ lie to the critical distances. Recall from Section~\ref{sec:background} that the critical distances at time $t_i$ are the sorted distinct pairwise distances $\varepsilon_1(t_i) < \varepsilon_2(t_i) < \cdots < \varepsilon_{M_i}(t_i)$, and that the in-gap condition requires each grid value $\widehat{\varepsilon}_j$ to satisfy $\varepsilon_{r(i,j)}(t_i) < \widehat{\varepsilon}_j < \varepsilon_{r(i,j)+1}(t_i)$ for some index $r(i,j)$.

\begin{definition}[Grid Clearance]
\label{def:clearance}
For each sampled time $t_i$ and grid scale $\widehat{\varepsilon}_j$, the \emph{cell clearance} is
\begin{equation}
g_{i,j} := \min\bigl(\widehat{\varepsilon}_j - \varepsilon_{r(i,j)}(t_i),\; \varepsilon_{r(i,j)+1}(t_i) - \widehat{\varepsilon}_j\bigr),
\end{equation}
measuring the distance from $\widehat{\varepsilon}_j$ to the nearest critical distance at time $t_i$. The \emph{global grid clearance} is
\begin{equation}
\Gamma := \min_{i,j} g_{i,j},
\end{equation}
the minimum clearance across all grid cells.
\end{definition}

The in-gap condition guarantees $g_{i,j} > 0$ for all $(i,j)$, hence $\Gamma > 0$. However, the in-gap condition alone does not control how small $\Gamma$ can be. The grid clearance $\Gamma$ depends on both the critical-distance spectrum and the placement of the grid values $\widehat{\varepsilon}_j$ relative to that spectrum.

\begin{theorem}[Exact Stability]
\label{thm:exact_stability}
Suppose the in-gap condition holds, and let $\Gamma$ be the global grid clearance. If
\begin{equation}
\delta < \frac{\Gamma}{2},
\label{eq:delta_bound}
\end{equation}
then the Crocker diagram remains exactly stable:
\begin{equation}
\beta_k(\tilde{P}(t_i),\widehat{\varepsilon}_j)=\beta_k(P(t_i),\widehat{\varepsilon}_j) \quad\text{for all }k,j,i.
\end{equation}
\end{theorem}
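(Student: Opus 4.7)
The plan is to reduce the theorem to an edge-by-edge comparison of the 1-skeletons of the Vietoris--Rips complexes at each grid scale. Because $\operatorname{VR}(\cdot,\widehat{\varepsilon}_j)$ is a flag complex, two such complexes on the same abstract vertex set $\{1,\dots,m\}$ coincide as soon as their edge sets agree, and isomorphic simplicial complexes have identical Betti numbers in every dimension $k$. Fixing $(i,j)$, it therefore suffices to show that for every pair $a\neq b$ the inequality $\|p_a(t_i)-p_b(t_i)\|\le\widehat{\varepsilon}_j$ holds if and only if $\|\tilde p_a(t_i)-\tilde p_b(t_i)\|\le\widehat{\varepsilon}_j$.

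The key input is the $2\delta$-Lipschitz shift of pairwise distances recorded in \eqref{eq:distance_shift}, which I would combine with the grid-clearance hypothesis as follows. Each unperturbed distance $d_{ab}:=\|p_a(t_i)-p_b(t_i)\|$ equals one of the critical values $\varepsilon_r(t_i)$, while the in-gap condition strictly brackets $\widehat{\varepsilon}_j$ between $\varepsilon_{r(i,j)}(t_i)$ and $\varepsilon_{r(i,j)+1}(t_i)$. Since the critical-distance sequence is strictly increasing, the definition of $g_{i,j}$ then yields $|\widehat{\varepsilon}_j-\varepsilon_r(t_i)|\ge g_{i,j}\ge\Gamma$ for every $r$. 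Consequently, whenever $d_{ab}\le\widehat{\varepsilon}_j$ one in fact has $d_{ab}\le\widehat{\varepsilon}_j-\Gamma$, so combining the distance-shift bound with $2\delta<\Gamma$ gives $\|\tilde p_a(t_i)-\tilde p_b(t_i)\|\le d_{ab}+2\delta<\widehat{\varepsilon}_j$; the complementary branch $d_{ab}>\widehat{\varepsilon}_j$ is symmetric and uses the lower bound $\|\tilde p_a(t_i)-\tilde p_b(t_i)\|\ge d_{ab}-2\delta>\widehat{\varepsilon}_j$. Hence no pair crosses the threshold $\widehat{\varepsilon}_j$ under the perturbation, and the edge sets of $\operatorname{VR}(P(t_i),\widehat{\varepsilon}_j)$ and $\operatorname{VR}(\tilde P(t_i),\widehat{\varepsilon}_j)$ coincide.

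A small bookkeeping subtlety is that some grid values may lie below the smallest positive pairwise distance or above the largest, in which case one of the bracketing indices corresponds to the formal extensions $\varepsilon_0(t_i)=0$ or $\varepsilon_{M_i+1}(t_i)=+\infty$. These extended values are not realized by any pair, but the one-sided clearance bound still separates every \emph{actual} distance from $\widehat{\varepsilon}_j$ by at least $\Gamma$, so the equivalence of edge presence goes through unchanged. I do not anticipate a serious obstacle here; the only thing that requires care is confirming that the clearance argument applies uniformly in $(i,j,a,b)$ and then invoking the flag-complex identity to lift equality of 1-skeletons to equality of all higher simplices, and hence to equality of all Betti numbers $\beta_k(\widehat{\varepsilon}_j,t_i)$ for every $k$, $j$, and $i$.
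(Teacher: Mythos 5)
Your proposal is correct and follows essentially the same route as the paper's proof: both arguments use the $2\delta$ bound on pairwise-distance shifts, the in-gap condition together with the clearance $g_{i,j}\ge\Gamma$ to separate every actual pairwise distance from each grid threshold, and the flag-complex property to lift preservation of edge sets to preservation of all Betti numbers. Your explicit two-case threshold analysis and the remark about the formal boundary extensions $\varepsilon_0=0$, $\varepsilon_{M_i+1}=+\infty$ merely spell out details the paper's proof leaves implicit.
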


\begin{proof}
An edge between points $p_a(t_i)$ and $p_b(t_i)$ exists in the Vietoris--Rips complex at scale $\widehat{\varepsilon}_j$ if and only if $\|p_a(t_i) - p_b(t_i)\| \le \widehat{\varepsilon}_j$. For the Crocker diagram to remain unchanged, we need each edge's status (present or absent) to be preserved under perturbation.

By~\eqref{eq:distance_shift}, each pairwise distance changes by at most $2\delta$. For the edge status at $\widehat{\varepsilon}_j$ to change, the perturbed distance must cross the threshold $\widehat{\varepsilon}_j$. This can only happen if the original distance lies within $2\delta$ of $\widehat{\varepsilon}_j$.

Since the critical distances are exactly the sorted distinct pairwise distances, and the in-gap condition places $\widehat{\varepsilon}_j$ strictly between consecutive critical distances $\varepsilon_{r(i,j)}(t_i)$ and $\varepsilon_{r(i,j)+1}(t_i)$, there are no pairwise distances strictly between these bracketing values. Therefore, the nearest pairwise distance to $\widehat{\varepsilon}_j$ is at distance at least $g_{i,j}$:
\begin{equation}
\min_{a<b} \bigl| \|p_a(t_i) - p_b(t_i)\| - \widehat{\varepsilon}_j \bigr| \geq g_{i,j} \geq \Gamma.
\end{equation}
Therefore, if $2\delta < \Gamma$, no pairwise distance can cross any grid threshold $\widehat{\varepsilon}_j$.

Since all edge statuses are preserved at every grid scale and time, and the Vietoris--Rips complex is a flag complex determined by its edges, the entire simplicial complex---and consequently all Betti numbers---remain unchanged.
\end{proof}

\begin{remark}
\label{rem:gridclearance}
The grid clearance $\Gamma$ is related to, but distinct from, the minimum critical-distance gap $\Delta$. At each cell $(i,j)$, the grid value $\widehat{\varepsilon}_j$ lies in some gap $(\varepsilon_r(t_i), \varepsilon_{r+1}(t_i))$, and the cell clearance satisfies
\begin{equation}
g_{i,j} \le \frac{\varepsilon_{r+1}(t_i) - \varepsilon_r(t_i)}{2}.
\end{equation}
To relate $\Gamma$ to a gap-based quantity, define the \emph{grid-relevant gap}
\begin{equation}
\Delta_{\mathrm{grid}} := \min_{i,j} \bigl(\varepsilon_{r(i,j)+1}(t_i) - \varepsilon_{r(i,j)}(t_i)\bigr),
\end{equation}
the minimum width among the specific gaps that contain some grid value at some sampled time. Then $\Gamma \le \Delta_{\mathrm{grid}}/2$. Since $\Delta_{\mathrm{grid}} \ge \Delta$ (because $\Delta$ is a minimum over all gaps while $\Delta_{\mathrm{grid}}$ is a minimum over a subset), there is no universal inequality of the form $\Gamma \le \Delta/2$. In particular, $\Gamma$ can be much larger than $\Delta/2$ if the grid never lands in the narrowest gaps. In practice, one should compute $\Gamma$ directly from the data and grid specification rather than relying on gap-based estimates.
\end{remark}

This exact-stability theorem provides a fundamental guarantee: if measurement errors are sufficiently small compared to the grid clearance, the topological summary remains completely unaffected. The result is most useful when the distance spectrum has well-separated families (as in symmetric configurations or lattices), so that $\Gamma$ is not too small.

What happens when $\delta \geq \Gamma/2$ and we can no longer guarantee exact stability? In this case, some pairwise distances may cross grid thresholds, potentially altering the topology. However, these changes are localized to simplices containing perturbed vertices. To quantify these changes, we introduce a parameter measuring the local density of points.

\begin{definition}[Local Density Parameter]
\label{def:lambda}
For each scale $\widehat{\varepsilon}_j$ and perturbation bound $\delta \ge 0$, the \emph{$\delta$-inflated local density parameter} $\Lambda_\delta(\widehat{\varepsilon}_j)$ measures the maximum number of points within distance $\widehat{\varepsilon}_j + 2\delta$ of any single point, across all points and time steps:
\begin{equation}
\Lambda_\delta(\widehat{\varepsilon}_j) = \max_{i=1,\dots,n_t} \max_{a=1,\dots,m} \# \left\{ \ell : \|p_\ell(t_i) - p_a(t_i)\| \le \widehat{\varepsilon}_j + 2\delta \right\}.
\end{equation}
When $\delta = 0$, we write $\Lambda(\widehat{\varepsilon}_j) := \Lambda_0(\widehat{\varepsilon}_j)$ for the \emph{unperturbed} local density.
\end{definition}

This count always includes the point $p_a$ itself (since $\|p_a-p_a\|=0 \leq \widehat{\varepsilon}_j + 2\delta$), so $\Lambda_\delta(\widehat{\varepsilon}_j) \geq 1$ for any $\widehat{\varepsilon}_j > 0$. The inflation by $2\delta$ accounts for the fact that when points move by at most $\delta$, two points at perturbed distance $\widehat{\varepsilon}_j$ were originally at distance at most $\widehat{\varepsilon}_j + 2\delta$.

The parameter $\Lambda_\delta(\widehat{\varepsilon}_j)$ provides crucial insight into the intrinsic dimensionality and density of the data. For example, points sampled from a 1-dimensional curve in $\mathbb{R}^d$ will have $\Lambda_\delta(\widehat{\varepsilon}_j) \approx 2 + C\,(\widehat{\varepsilon}_j + 2\delta)$ for some constant $C$, reflecting the fact that each point has approximately two neighbors along the curve. Similarly, points sampled from a 2-dimensional surface will have $\Lambda_\delta(\widehat{\varepsilon}_j) \approx \pi\,(\widehat{\varepsilon}_j + 2\delta)^2\,\rho$, where $\rho$ is the sampling density. In general, lower-dimensional manifolds embedded in $\mathbb{R}^d$ will have smaller $\Lambda_\delta(\widehat{\varepsilon}_j)$ values than uniformly distributed points in the full ambient space.

To bound the potential changes in topology, we first need to understand how many simplices can be affected by perturbing a single point:

\begin{lemma}[Simplex Participation Bound]
\label{lem:simplex_bound}
In the standard convention where a $k$-simplex has $k+1$ vertices, a vertex in the \emph{perturbed} Vietoris--Rips complex at scale $\widehat{\varepsilon}_j$ participates in at most $\binom{\Lambda_\delta(\widehat{\varepsilon}_j)-1}{k}$ $k$-simplices and at most $\binom{\Lambda_\delta(\widehat{\varepsilon}_j)-1}{k+1}$ $(k+1)$-simplices.
\end{lemma}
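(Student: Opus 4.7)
The plan is to bound the number of neighbors of a fixed vertex in the perturbed Vietoris--Rips complex at scale $\widehat{\varepsilon}_j$ using the $\delta$-inflated density parameter, and then observe that every simplex containing that vertex is built from a subset of those neighbors, reducing the count to a binomial coefficient.

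First I would fix a vertex $\tilde{p}_a(t_i)$ of the perturbed complex at some sampled time $t_i$ and let $N_a$ denote the set of vertices $\tilde{p}_\ell(t_i)$ with $\ell \ne a$ satisfying $\|\tilde{p}_\ell(t_i) - \tilde{p}_a(t_i)\| \le \widehat{\varepsilon}_j$. These are precisely the vertices sharing an edge with $\tilde{p}_a(t_i)$ at scale $\widehat{\varepsilon}_j$. Next I would invoke the distance shift estimate from~\eqref{eq:distance_shift}: for any $\ell$, the original distance satisfies
\begin{equation}
\|p_\ell(t_i) - p_a(t_i)\| \le \|\tilde{p}_\ell(t_i) - \tilde{p}_a(t_i)\| + 2\delta \le \widehat{\varepsilon}_j + 2\delta.
\end{equation}
Hence every index $\ell \in N_a$ corresponds to an unperturbed point lying within distance $\widehat{\varepsilon}_j + 2\delta$ of $p_a(t_i)$. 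Including $a$ itself, this gives $|N_a| + 1 \le \Lambda_\delta(\widehat{\varepsilon}_j)$ by Definition~\ref{def:lambda}, and therefore $|N_a| \le \Lambda_\delta(\widehat{\varepsilon}_j) - 1$.

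The final step is purely combinatorial. Because the Vietoris--Rips complex is a flag complex, any $k$-simplex containing the vertex $\tilde{p}_a(t_i)$ consists of $\tilde{p}_a(t_i)$ together with $k$ other vertices, each of which must share an edge with $\tilde{p}_a(t_i)$ and therefore must lie in $N_a$. Every such $k$-simplex is thus obtained by selecting $k$ vertices from $N_a$, giving at most $\binom{|N_a|}{k} \le \binom{\Lambda_\delta(\widehat{\varepsilon}_j) - 1}{k}$ possibilities. The same reasoning applied with $k$ replaced by $k+1$ yields the bound $\binom{\Lambda_\delta(\widehat{\varepsilon}_j) - 1}{k+1}$ on the number of $(k+1)$-simplices containing $\tilde{p}_a(t_i)$. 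Taking the maximum over $a$ and $i$ gives the stated bounds.

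There is no real obstacle here; the only subtlety worth flagging carefully is the direction of the triangle inequality, since $\Lambda_\delta$ is defined via the \emph{unperturbed} configuration while the simplex count is made in the \emph{perturbed} complex. The $2\delta$ inflation in Definition~\ref{def:lambda} is designed precisely to absorb this translation. The bound is of course loose in that not every $k$-subset of $N_a$ need form a simplex (the other pairwise distances must also be $\le \widehat{\varepsilon}_j$), but since we only need an upper bound this slack is harmless.
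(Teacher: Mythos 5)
Your proposal is correct and follows essentially the same route as the paper's own proof: bound the perturbed neighbors by translating back to the unperturbed configuration via the triangle inequality, invoke Definition~\ref{def:lambda} to cap the neighbor count at $\Lambda_\delta(\widehat{\varepsilon}_j)-1$, and count simplices combinatorially as subsets of that neighbor set. Your explicit remark that the flag-complex property forces every vertex of a simplex containing $\tilde{p}_a$ to be a neighbor of $\tilde{p}_a$ is a slight sharpening of the paper's exposition, but the argument is the same.
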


\begin{proof}
Consider a vertex $\tilde{p}_a(t_i)$ in the perturbed configuration. Any neighbor $\tilde{p}_b(t_i)$ at scale $\widehat{\varepsilon}_j$ satisfies $\|\tilde{p}_a(t_i) - \tilde{p}_b(t_i)\| \le \widehat{\varepsilon}_j$. By the triangle inequality and the perturbation bound,
\begin{equation}
\|p_a(t_i) - p_b(t_i)\| \le \|\tilde{p}_a(t_i) - \tilde{p}_b(t_i)\| + 2\delta \le \widehat{\varepsilon}_j + 2\delta.
\end{equation}
Thus, every neighbor of $\tilde{p}_a$ in the perturbed complex at scale $\widehat{\varepsilon}_j$ corresponds to a point within distance $\widehat{\varepsilon}_j + 2\delta$ of $p_a$ in the original configuration. There are at most $\Lambda_\delta(\widehat{\varepsilon}_j) - 1$ such points (excluding $p_a$ itself). To form a $k$-simplex containing our vertex, we select $k$ of these neighbors, yielding at most $\binom{\Lambda_\delta(\widehat{\varepsilon}_j)-1}{k}$ possible $k$-simplices. Similarly, $(k+1)$-simplices require selecting $k+1$ neighbors, giving $\binom{\Lambda_\delta(\widehat{\varepsilon}_j)-1}{k+1}$ possibilities.
\end{proof}

To translate these simplex counts into changes in Betti numbers, recall that the $k$th Betti number is computed as:
\begin{equation}
\beta_k = \dim \ker \partial_k - \dim \operatorname{im} \partial_{k+1},
\end{equation}
where $\partial_k$ is the $k$th boundary map in the simplicial chain complex. When a point is perturbed, changes to $\beta_k$ can arise from:
\begin{enumerate}
\item[(i)] $k$-simplices added or removed, affecting $\ker \partial_k$; or
\item[(ii)] $(k+1)$-simplices added or removed, affecting $\operatorname{im} \partial_{k+1}$.
\end{enumerate}

\begin{proposition}[Change in Betti Number per Point]
\label{prop:betti_change}
For perturbations bounded by $\delta$, the maximum possible change in $\beta_k$ due to a single perturbed point is bounded by:
\begin{equation}
|\Delta \beta_k| \leq \binom{\Lambda_\delta(\widehat{\varepsilon}_j)-1}{k} + \binom{\Lambda_\delta(\widehat{\varepsilon}_j)-1}{k+1} = \binom{\Lambda_\delta(\widehat{\varepsilon}_j)}{k+1}.
\end{equation}
\end{proposition}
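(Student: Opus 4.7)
The plan is to translate Lemma~\ref{lem:simplex_bound} into a Betti-number bound by routing through the decomposition $\beta_k = \dim \ker \partial_k - \dim \operatorname{im} \partial_{k+1}$ already flagged just above the proposition. The starting point is that displacing only vertex $p_a$ can flip the Vietoris--Rips membership of simplices incident to $p_a$ and no others. The crucial structural observation is that the $2\delta$-inflation built into the definition of $\Lambda_\delta(\widehat{\varepsilon}_j)$ is designed precisely so that the same cap $\Lambda_\delta(\widehat{\varepsilon}_j)-1$ simultaneously bounds the number of neighbors of $p_a$ in the original complex and the number of neighbors of $\tilde p_a$ in the perturbed complex. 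Hence the \emph{union} of affected neighbors across the two complexes is bounded by $\Lambda_\delta(\widehat{\varepsilon}_j)-1$, and the total number of $k$-simplex flips (additions plus removals) across the symmetric difference is at most $\binom{\Lambda_\delta(\widehat{\varepsilon}_j)-1}{k}$, with the analogous bound $\binom{\Lambda_\delta(\widehat{\varepsilon}_j)-1}{k+1}$ on $(k+1)$-simplex flips.

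I would then invoke the standard ``one-simplex-at-a-time'' principle from persistent homology: inserting a single simplex into a valid simplicial complex changes exactly one Betti number by exactly one, and removing a simplex does likewise. To apply this cleanly, I would process the symmetric difference by first removing the simplices in the original-minus-perturbed set in order of decreasing dimension, then adding the simplices in the perturbed-minus-original set in order of increasing dimension; the intermediate complex $K \cap K'$ is itself a valid simplicial complex, and each intermediate step in both halves of the process respects the face-closure requirement. Each $k$-simplex flip therefore contributes at most one unit to $|\Delta \dim \ker \partial_k|$, and each $(k+1)$-simplex flip contributes at most one unit to $|\Delta \dim \operatorname{im} \partial_{k+1}|$.

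Combining the two contributions via the triangle inequality applied to $\Delta \beta_k = \Delta \dim \ker \partial_k - \Delta \dim \operatorname{im} \partial_{k+1}$ yields
\begin{equation*}
|\Delta \beta_k| \le \binom{\Lambda_\delta(\widehat{\varepsilon}_j)-1}{k} + \binom{\Lambda_\delta(\widehat{\varepsilon}_j)-1}{k+1},
\end{equation*}
and Pascal's identity $\binom{n-1}{k} + \binom{n-1}{k+1} = \binom{n}{k+1}$ collapses the two binomials into the closed form stated in the proposition. The step I expect to require the most care is ensuring that additions and removals are jointly accounted for by the single cap $\binom{\Lambda_\delta(\widehat{\varepsilon}_j)-1}{k}$ rather than twice that quantity; this is exactly where the $2\delta$-inflation in $\Lambda_\delta$ earns its keep, since it dominates the disjoint union of removed and added $k$-simplex sets simultaneously. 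Without that device the naive count would double each binomial and Pascal's collapse to the clean form $\binom{\Lambda_\delta(\widehat{\varepsilon}_j)}{k+1}$ would be lost.
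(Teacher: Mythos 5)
Your proposal is correct and follows essentially the same route as the paper's proof: bound the number of $k$- and $(k+1)$-simplices that can flip via Lemma~\ref{lem:simplex_bound}, note that each single-simplex change alters $\beta_k$ by at most one (the paper phrases this as a rank-one column update of the boundary matrix), and collapse the sum with Pascal's identity. Your added care---checking that $\Lambda_\delta-1$ caps the \emph{union} of neighbors across both complexes so the symmetric difference is covered by a single binomial, and ordering the removals/insertions to keep intermediate complexes valid---fills in details the paper's proof leaves implicit, but does not change the argument.
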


\begin{proof}
Adding or removing a single simplex corresponds to adding or removing one column in the relevant boundary matrix: $\partial_k$ for a $k$-simplex, $\partial_{k+1}$ for a $(k+1)$-simplex. A rank-one column update changes the matrix rank by at most~1, so the associated kernel or image dimension changes by at most~1. By Lemma~\ref{lem:simplex_bound}, the number of $k$-simplices and $(k+1)$-simplices containing a given vertex is bounded by $\binom{\Lambda_\delta-1}{k}$ and $\binom{\Lambda_\delta-1}{k+1}$ respectively. Since each simplex change alters $\beta_k$ by at most one, the total potential change is bounded by their sum. Pascal's identity $\binom{n-1}{k} + \binom{n-1}{k+1} = \binom{n}{k+1}$ yields the simplified form.
\end{proof}

\begin{theorem}[Global Stability Bound]
\label{thm:global_bound}
Let $B_k^{\text{orig}}$ and $B_k^{\text{mod}}$ denote the matrices containing the $k$-th Betti numbers for the original and perturbed point clouds, respectively. Let $m^* \leq m$ be the number of perturbed vertices, and suppose all perturbations are bounded by $\delta$. The cumulative topological change across all entries in the Crocker diagram is bounded by:
\begin{equation}
\label{eq:comb_bound}
\| B_k^{\text{mod}} - B_k^{\text{orig}} \|_1
\;\le\;
n_t \, m^* \,
\sum_{j=1}^{n_\varepsilon}
\binom{\Lambda_\delta(\widehat{\varepsilon}_j)}{k+1},
\end{equation}
where $\|\cdot\|_1$ denotes the entrywise $\ell_1$ norm.
\end{theorem}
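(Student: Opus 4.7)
The plan is to reduce the entrywise $\ell_1$ norm to a sum of per-cell Betti differences, and then handle each cell via a telescoping decomposition into single-vertex perturbations to which Proposition~\ref{prop:betti_change} applies directly. First I would expand
\begin{equation}
\| B_k^{\text{mod}} - B_k^{\text{orig}} \|_1 = \sum_{i=1}^{n_t} \sum_{j=1}^{n_\varepsilon} \bigl|\beta_k(\tilde{P}(t_i),\widehat{\varepsilon}_j) - \beta_k(P(t_i),\widehat{\varepsilon}_j)\bigr|,
\end{equation}
so that it suffices to bound each summand by $m^* \binom{\Lambda_\delta(\widehat{\varepsilon}_j)}{k+1}$ and then collect the factors of $n_t$ and $n_\varepsilon$.

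Fix a cell $(i,j)$. Index the perturbed vertices as $a_1,\dots,a_{m^*}$ and construct an interpolating sequence of point clouds $P^{(0)} = P(t_i), P^{(1)}, \dots, P^{(m^*)} = \tilde{P}(t_i)$, where $P^{(s)}$ uses the perturbed positions for $a_1,\dots,a_s$ and the original positions for the remaining vertices. The triangle inequality applied to the absolute Betti differences yields
\begin{equation}
\bigl|\beta_k(\tilde{P}(t_i),\widehat{\varepsilon}_j) - \beta_k(P(t_i),\widehat{\varepsilon}_j)\bigr| \le \sum_{s=1}^{m^*} \bigl|\beta_k(P^{(s)},\widehat{\varepsilon}_j) - \beta_k(P^{(s-1)},\widehat{\varepsilon}_j)\bigr|.
\end{equation}
Each summand on the right is a single-vertex perturbation of magnitude at most $\delta$, to which Proposition~\ref{prop:betti_change} applies with bound $\binom{\Lambda_\delta(\widehat{\varepsilon}_j)}{k+1}$. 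Summing over $s$, then over $j$ and $i$, delivers~\eqref{eq:comb_bound}.

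The main obstacle is justifying that the local-density parameter $\Lambda_\delta(\widehat{\varepsilon}_j)$, which is defined via a $2\delta$ inflation around the \emph{original} configuration, is a valid bound for each intermediate complex $\mathrm{VR}(P^{(s)},\widehat{\varepsilon}_j)$ rather than only for the endpoints. The resolution is that every vertex of $P^{(s)}$ lies within $\delta$ of its counterpart in $P(t_i)$, so the triangle-inequality argument from Lemma~\ref{lem:simplex_bound} applies verbatim at each step: a neighbor of the vertex moved at step $s$ in the complex at scale $\widehat{\varepsilon}_j$ corresponds to an original point within $\widehat{\varepsilon}_j + 2\delta$ of the original vertex. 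The $2\delta$ inflation was engineered precisely to absorb this worst-case shift between any two configurations differing by displacements of magnitude at most $\delta$, which legitimizes using a single uniform value of $\Lambda_\delta(\widehat{\varepsilon}_j)$ for every step of the telescoping sum.
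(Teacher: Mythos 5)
Your proposal is correct and follows essentially the same route as the paper: both reduce the entrywise $\ell_1$ norm to per-cell Betti differences and attribute the change at each cell to the perturbed vertices via Proposition~\ref{prop:betti_change}, yielding the factor $m^*\binom{\Lambda_\delta(\widehat{\varepsilon}_j)}{k+1}$ per cell before summing over the $n_t \times n_\varepsilon$ grid. Your explicit telescoping through intermediate configurations---together with the check that the $2\delta$-inflated density parameter remains a valid neighbor bound at every intermediate step, since all vertices stay within $\delta$ of their original positions---makes rigorous a per-vertex attribution that the paper's proof states only informally.
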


\begin{proof}
By Proposition~\ref{prop:betti_change}, each perturbed vertex contributes at most $\binom{\Lambda_\delta(\widehat{\varepsilon}_j)}{k+1}$ to the change in $\beta_k$ at each grid point $(\widehat{\varepsilon}_j, t_i)$. With $m^*$ perturbed vertices across $n_t$ time steps, and summing over all $n_\varepsilon$ scale values, we obtain the bound.

This is a worst-case estimate: simplices with multiple perturbed vertices are counted separately for each vertex, leading to potential overcounting. However, the bound's dependence on local density $\Lambda_\delta$ rather than global point count $m$ makes it practically useful. Note that when $\delta$ is small, $\Lambda_\delta(\widehat{\varepsilon}_j) \approx \Lambda(\widehat{\varepsilon}_j)$, recovering the intuition that stability depends on local geometry.
\end{proof}

Taken together, these results provide a two-tiered stability framework: exact stability (Theorem~\ref{thm:exact_stability}) guarantees unchanged Betti numbers when perturbations are small relative to the grid clearance, while bounded stability (Theorem~\ref{thm:global_bound}) caps the total change when exact stability cannot be verified. The bounded-change theorem applies universally and depends only on local density, explaining why Crocker diagrams often perform well empirically even when the stringent exact-stability conditions are not met.

We now apply this theory to concrete examples that highlight the geometry-dependent nature of these bounds.

\subsection{Analytical Application: Breathing Polygons}
\label{sec:breathing-poly}

To illustrate our stability results, we consider a dynamic point cloud $P(t)$ consisting of particles in $\mathbb{R}^2$ forming a regular polygon whose size oscillates over time. This \emph{breathing polygon} provides a simple but instructive analytical case where the critical distances form well-separated families, allowing explicit computation of both the critical-distance gap $\Delta$ and the grid clearance $\Gamma$.

We define the circumscribed radius of the polygon to vary over time according to
\begin{equation}
a(t)=1+\frac{1}{2}\sin(t), \quad 0\le t<2\pi .
\end{equation}
At each time $t$, we place the $m$ vertices of the regular polygon on the circle of radius $a(t)$, using equal angular spacing. Specifically, the $v$-th vertex is positioned at
\begin{equation}
p_v(t)=\left(a(t)\cos\left(\frac{2\pi v}{m}\right),a(t)\sin\left(\frac{2\pi v}{m}\right)\right),
\quad v=0,1,\dots,m-1.
\end{equation}

The pairwise distances between vertices are chords of the circle. For vertices separated by $\ell$ steps, the chord length is
\begin{equation}
c_\ell(t)=2a(t)\sin\left(\frac{\pi\ell}{m}\right), 
\quad \ell=1,2,\ldots,\left\lfloor\frac{m}{2}\right\rfloor .
\end{equation}
Here $c_\ell(t)$ represents a distinct pairwise distance at time $t$, with $c_1(t)$ the side length and larger $\ell$ giving longer diagonals. Crucially, for a regular $m$-gon there are only $\lfloor m/2 \rfloor$ distinct distance families---far fewer than the $\binom{m}{2}$ total pairs---making this an ideal setting for exact-stability analysis.

To characterize the critical-distance spectrum, we examine consecutive chord lengths:
\begin{equation}
\Delta_\ell(t)=c_{\ell+1}(t)-c_\ell(t)
              =2a(t)\bigl[\sin\bigl((\ell+1)\theta\bigr)-\sin\bigl(\ell\theta\bigr)\bigr],
\qquad \theta=\frac{\pi}{m}.
\end{equation}
Using $\sin A-\sin B = 2\sin\bigl(\frac{A-B}{2}\bigr)\cos\bigl(\frac{A+B}{2}\bigr)$, this becomes
\begin{equation}
\Delta_\ell(t)=4a(t)\sin(\theta/2)\cos(\ell\theta+\theta/2).
\end{equation}
Since $\cos x$ is strictly decreasing on $(0,\pi)$, the smallest gap occurs at:
\begin{equation}
\ell^{\star}=
\begin{cases}
\lfloor m/2\rfloor-1 & \text{for } m\ge 5\\
1 & \text{for } m=4\\
\text{(only one distance)} & \text{for } m=3.
\end{cases}
\end{equation}

Because $\Delta_\ell(t)$ is proportional to $a(t)$, it attains its minimum when $a(t)$ is smallest, namely at $t=\frac{3\pi}{2}$ with $a_{\min}=1/2$. Hence the minimum critical-distance gap is
\begin{equation}
\Delta=2a_{\min}\bigl[\sin\bigl((\ell^{\star}+1)\theta\bigr)-\sin\bigl(\ell^{\star}\theta\bigr)\bigr].
\label{eq:global-delta}
\end{equation}
Table~\ref{tab:stability} provides the numerical values for small $m$.

\begin{table}[h]
\caption{Critical-distance gaps for regular $m$-gons with breathing radius $a(t)=1+\frac{1}{2}\sin t$. The minimum gap $\Delta$ characterizes the distance spectrum. The grid clearance $\Gamma$ depends on grid placement and satisfies $\Gamma \le \Delta_{\mathrm{grid}}/2$, where $\Delta_{\mathrm{grid}}$ is the minimum width among gaps actually occupied by grid values; in general, $\Gamma$ should be computed directly.}
\label{tab:stability}
\centering
\begin{tabular}{ccc}
\hline
$m$ & Minimal gap $\Delta$ & $\Delta/2$ (for reference)\\
\hline
4 & 0.293 & 0.146\\
5 & 0.363 & 0.182\\
6 & 0.134 & 0.067\\
7 & 0.193 & 0.097\\
8 & 0.076 & 0.038\\
\hline
\end{tabular}
\end{table}

The stability threshold exhibits a distinctive alternating pattern because even-sided polygons contain diameter chords that create smaller gaps between critical scales; Figure~\ref{fig:breathing-polygon-geometry} illustrates this geometric mechanism.


\begin{figure}[ht!]
\centering

\def\PointSize{2pt}

\def\pentConeUnit{1.1755705045849463}  
\def\pentCtwoUnit{1.9021130325903071}  
\def\hexConeUnit{1.0}                   
\def\hexCtwoUnit{1.7320508075688772}    
\def\hexCthreeUnit{2.0}                 

\begin{subfigure}[t]{0.32\textwidth}
\centering
\begin{tikzpicture}[scale=1.3]
  \def\radius{0.5}
  \draw[gray!50, thin] (0,0) circle[radius=\radius];
  \foreach \v in {0,...,4} {
    \coordinate (v\v) at ({90 + \v*72}:\radius);
  }
  \draw[thick] (v0) -- (v1) -- (v2) -- (v3) -- (v4) -- cycle;
  \draw[blue!70, very thick] (v0) -- (v1);
  \draw[red!70, very thick, dashed] (v0) -- (v2);
  \foreach \v in {0,...,4} {
    \fill (v\v) circle[radius=1.5pt];
  }
  \node[blue!70, font=\scriptsize, above] at ($(v0)!0.5!(v1)$) {$c_1$};
  \node[red!70, font=\scriptsize] at ($(v0)!0.5!(v2) + (-0.15,0)$) {$c_2$};
\end{tikzpicture}
\caption*{\small (A) $a = 1/2$ (min)}
\end{subfigure}
\hfill
%
\begin{subfigure}[t]{0.32\textwidth}
\centering
\begin{tikzpicture}[scale=1.3]
  \def\radius{1.0}
  \draw[gray!50, thin] (0,0) circle[radius=\radius];
  \foreach \v in {0,...,4} {
    \coordinate (v\v) at ({90 + \v*72}:\radius);
  }
  \draw[thick] (v0) -- (v1) -- (v2) -- (v3) -- (v4) -- cycle;
  \draw[blue!70, very thick] (v0) -- (v1);
  \draw[red!70, very thick, dashed] (v0) -- (v2);
  \foreach \v in {0,...,4} {
    \fill (v\v) circle[radius=\PointSize];
  }
  \node[blue!70, font=\scriptsize] at ($(v0)!0.5!(v1) + (0.08,0.2)$) {$c_1$};
  \node[red!70, font=\scriptsize] at ($(v0)!0.5!(v2) + (-0.22,0)$) {$c_2$};
\end{tikzpicture}
\caption*{\small (B) $a = 1$ (mean)}
\end{subfigure}
\hfill
%
\begin{subfigure}[t]{0.32\textwidth}
\centering
\begin{tikzpicture}[scale=1.3]
  \def\radius{1.5}
  \draw[gray!50, thin] (0,0) circle[radius=\radius];
  \foreach \v in {0,...,4} {
    \coordinate (v\v) at ({90 + \v*72}:\radius);
  }
  \draw[thick] (v0) -- (v1) -- (v2) -- (v3) -- (v4) -- cycle;
  \draw[blue!70, very thick] (v0) -- (v1);
  \draw[red!70, very thick, dashed] (v0) -- (v2);
  \foreach \v in {0,...,4} {
    \fill (v\v) circle[radius=\PointSize];
  }
  \node[blue!70, font=\scriptsize] at ($(v0)!0.5!(v1) + (0.1,0.28)$) {$c_1$};
  \node[red!70, font=\scriptsize] at ($(v0)!0.5!(v2) + (-0.28,0)$) {$c_2$};
\end{tikzpicture}
\caption*{\small (C) $a = 3/2$ (max)}
\end{subfigure}

\vspace{1.5em}

\begin{subfigure}[t]{\textwidth}
\centering
\begin{tikzpicture}[scale=1.0]
  \def\amin{0.5}
  \def\axislen{9}
  \def\epsmax{1.15}  
  
  \pgfmathsetmacro{\pentCone}{\pentConeUnit * \amin}  
  \pgfmathsetmacro{\pentCtwo}{\pentCtwoUnit * \amin}  
  \pgfmathsetmacro{\pentDelta}{\pentCtwo - \pentCone} 
  
  \pgfmathsetmacro{\hexCone}{\hexConeUnit * \amin}    
  \pgfmathsetmacro{\hexCtwo}{\hexCtwoUnit * \amin}    
  \pgfmathsetmacro{\hexCthree}{\hexCthreeUnit * \amin} 
  \pgfmathsetmacro{\hexDeltaSmall}{\hexCthree - \hexCtwo} 
  
  \def\toX#1{#1/\epsmax*\axislen}
  
  \def\yTop{1.4}
  
  \draw[thick, ->] (0, \yTop) -- (\axislen + 0.5, \yTop) node[right, font=\small] {$\varepsilon$};
  \node[left, font=\small] at (-0.2, \yTop) {$m=5$};
  
  \draw[blue!70, thick] ({\toX{\pentCone}}, \yTop - 0.15) -- ({\toX{\pentCone}}, \yTop + 0.15);
  \node[below, font=\scriptsize, blue!70] at ({\toX{\pentCone}}, \yTop - 0.2) {$c_1$};
  
  \draw[red!70, thick] ({\toX{\pentCtwo}}, \yTop - 0.15) -- ({\toX{\pentCtwo}}, \yTop + 0.15);
  \node[below, font=\scriptsize, red!70] at ({\toX{\pentCtwo}}, \yTop - 0.2) {$c_2$};
  
  \draw[thick, <->] ({\toX{\pentCone}}, \yTop + 0.25) -- ({\toX{\pentCtwo}}, \yTop + 0.25);
  \node[above, font=\scriptsize] at ({(\toX{\pentCone} + \toX{\pentCtwo})/2}, \yTop + 0.25) 
    {$\Delta \approx 0.363$};
  
  \pgfmathsetmacro{\pentMidpoint}{(\pentCone + \pentCtwo)/2}
  \fill[black!10] ({\toX{\pentCone}}, \yTop + 0.65) rectangle ({\toX{\pentMidpoint}}, \yTop + 0.85);
  \draw[thick] ({\toX{\pentCone}}, \yTop + 0.65) rectangle ({\toX{\pentMidpoint}}, \yTop + 0.85);
  \node[above, font=\scriptsize] at ({(\toX{\pentCone} + \toX{\pentMidpoint})/2}, \yTop + 0.85) 
    {$\Delta/2 \approx 0.18$ (for reference)};
  
  \def\yBot{-0.6}
  
  \draw[thick, ->] (0, \yBot) -- (\axislen + 0.5, \yBot) node[right, font=\small] {$\varepsilon$};
  \node[left, font=\small] at (-0.2, \yBot) {$m=6$};
  
  \draw[blue!70, thick] ({\toX{\hexCone}}, \yBot - 0.15) -- ({\toX{\hexCone}}, \yBot + 0.15);
  \node[below, font=\scriptsize, blue!70] at ({\toX{\hexCone}}, \yBot - 0.2) {$c_1$};
  
  \draw[red!70, thick] ({\toX{\hexCtwo}}, \yBot - 0.15) -- ({\toX{\hexCtwo}}, \yBot + 0.15);
  \node[below, font=\scriptsize, red!70] at ({\toX{\hexCtwo}}, \yBot - 0.2) {$c_2$};
  
  \draw[green!50!black, very thick] ({\toX{\hexCthree}}, \yBot - 0.18) -- ({\toX{\hexCthree}}, \yBot + 0.18);
  \node[below, font=\scriptsize, green!50!black] at ({\toX{\hexCthree}}, \yBot - 0.25) {$c_3$};

  \begin{scope}[shift={(\axislen + 2.2, {(\yTop + \yBot)/2 - 0.3})}]
    \def\hexRad{0.7}
    \foreach \v in {0,...,5} {
      \coordinate (h\v) at ({90 + \v*60}:\hexRad);
    }
    \draw[thick] (h0) -- (h1) -- (h2) -- (h3) -- (h4) -- (h5) -- cycle;
    \foreach \v in {0,...,5} {
      \fill (h\v) circle[radius=1.5pt];
    }
    \draw[blue!70, very thick] (h0) -- (h1);
    \node[blue!70, font=\tiny] at ($(h0)!0.5!(h1) + (0.0,0.18)$) {$c_1$};
    \draw[red!70, very thick, dashed] (h0) -- (h2);
    \node[red!70, font=\tiny] at ($(h0)!0.5!(h2) + (0.22,0)$) {$c_2$};
    \draw[green!50!black, very thick] (h0) -- (h3);
    \node[green!50!black, font=\tiny] at ($(h0)!0.5!(h3) + (0.22,0)$) {$c_3$};
    \node[font=\scriptsize, below] at (0, -\hexRad - 0.25) {hexagon};
  \end{scope}
  
  \draw[thick, <->] ({\toX{\hexCtwo}}, \yBot + 0.25) -- ({\toX{\hexCthree}}, \yBot + 0.25);
  \node[above, font=\scriptsize] at ({(\toX{\hexCtwo} + \toX{\hexCthree})/2}, \yBot + 0.25) 
    {$\Delta \approx 0.134$};
  
  \pgfmathsetmacro{\hexMidpoint}{(\hexCtwo + \hexCthree)/2}
  \fill[black!10] ({\toX{\hexCtwo}}, \yBot + 0.65) rectangle ({\toX{\hexMidpoint}}, \yBot + 0.85);
  \draw[thick] ({\toX{\hexCtwo}}, \yBot + 0.65) rectangle ({\toX{\hexMidpoint}}, \yBot + 0.85);
  \node[above, font=\scriptsize] at ({(\toX{\hexCtwo} + \toX{\hexMidpoint})/2}, \yBot + 0.85) 
    {$\Delta/2 \approx 0.07$ (for reference)};

\end{tikzpicture}
\caption*{\small (D) Chord-family spacing at $a = a_{\min} = 1/2$}
\end{subfigure}

\caption{Geometry of the breathing pentagon and the origin of stability bounds. \textbf{(A)--(C)}~A regular pentagon at minimum, mean, and maximum circumradius $a(t) = 1 + \frac{1}{2}\sin t$; the side length $c_1$ (blue) and diagonal $c_2$ (red dashed) scale proportionally with $a(t)$. \textbf{(D)}~At the minimum radius $a = 1/2$ (where the gap $\Delta$ is smallest), we compare the chord-family spacing for $m=5$ versus $m=6$. The pentagon has two chord families with gap $\Delta \approx 0.363$; the shaded region shows $\Delta/2 \approx 0.18$ for scale. The hexagon (inset) has three chord families: the diameter chord $c_3$ (green) compresses the smallest gap to $\Delta \approx 0.134$, with $\Delta/2 \approx 0.07$. This explains the alternating pattern in Table~\ref{tab:stability}: even-sided polygons have smaller values of $\Delta$ because diameter chords create tighter chord-family spacing. Note that the actual grid clearance $\Gamma$ depends on where grid values land relative to critical distances and should be computed directly.}
\label{fig:breathing-polygon-geometry}
\end{figure}

Figure~\ref{fig:breathing-polygon-geometry} illustrates the geometric origin of the stability bounds. For a regular $m$-gon, there are only $\lfloor m/2 \rfloor$ distinct chord-length families $c_\ell(t) = 2a(t)\sin(\pi\ell/m)$, and the Vietoris--Rips complex can change topology only when the scale parameter $\varepsilon$ crosses one of these critical values. Because every $c_\ell(t)$ scales linearly with the breathing radius $a(t)$, the minimum separation $\Delta$ between consecutive families is attained when $a(t)$ is smallest---that is, at $t = 3\pi/2$ where $a = 1/2$. Panel~(D) reveals why even-sided polygons yield smaller stability thresholds: the diameter chord $c_{m/2}$ introduces an additional family that compresses the critical-value gaps.

To apply Theorem~\ref{thm:exact_stability}, one must compute the grid clearance $\Gamma$ for a specific choice of scale grid $\{\widehat{\varepsilon}_j\}$. For illustration, consider the breathing pentagon ($m=5$) with grid values $\widehat{\varepsilon}_j = 0.2j$ for $j=1,\ldots,15$, uniformly spanning $[0,3]$. At each sampled time $t_i$, the critical distances are $c_1(t_i) = 2a(t_i)\sin(\pi/5)$ and $c_2(t_i) = 2a(t_i)\sin(2\pi/5)$. The cell clearance $g_{i,j}$ measures how far each grid value lies from these moving critical curves. Computing $\Gamma = \min_{i,j} g_{i,j}$ over the time discretization yields the actual stability threshold $\delta_{\max} = \Gamma/2$ for this specific grid. Since the critical distances scale with $a(t)$ while the grid remains fixed, $\Gamma$ is typically smaller than $\Delta/2$ in this example, though as noted in Remark~\ref{rem:gridclearance}, no universal inequality relates these quantities.

Figure~\ref{fig:crocker-stability} provides an illustrative Crocker-diagram view of Theorem~\ref{thm:exact_stability} for the breathing pentagon ($m=5$). The Crocker diagram records $\beta_1(\varepsilon,t)$ on a discrete scale--time grid. In the unperturbed case, $\beta_1=1$ precisely when the scale lies between the two critical distances $c_1(t)$ and $c_2(t)$, producing the band structure in panel~(A).

The exact-stability guarantee is that if $\delta < \Gamma/2$ (where $\Gamma$ is computed for the chosen grid), the Crocker diagram is unchanged cell-by-cell; panel~(B) therefore matches panel~(A). Panel~(C) illustrates how changes become possible once the exact-stability condition is violated: when perturbations cause critical distances to cross grid values, Betti counts change at specific grid cells, which panel~(D) localizes. These visualizations demonstrate the theory in a controlled setting where the distance spectrum has well-separated families.

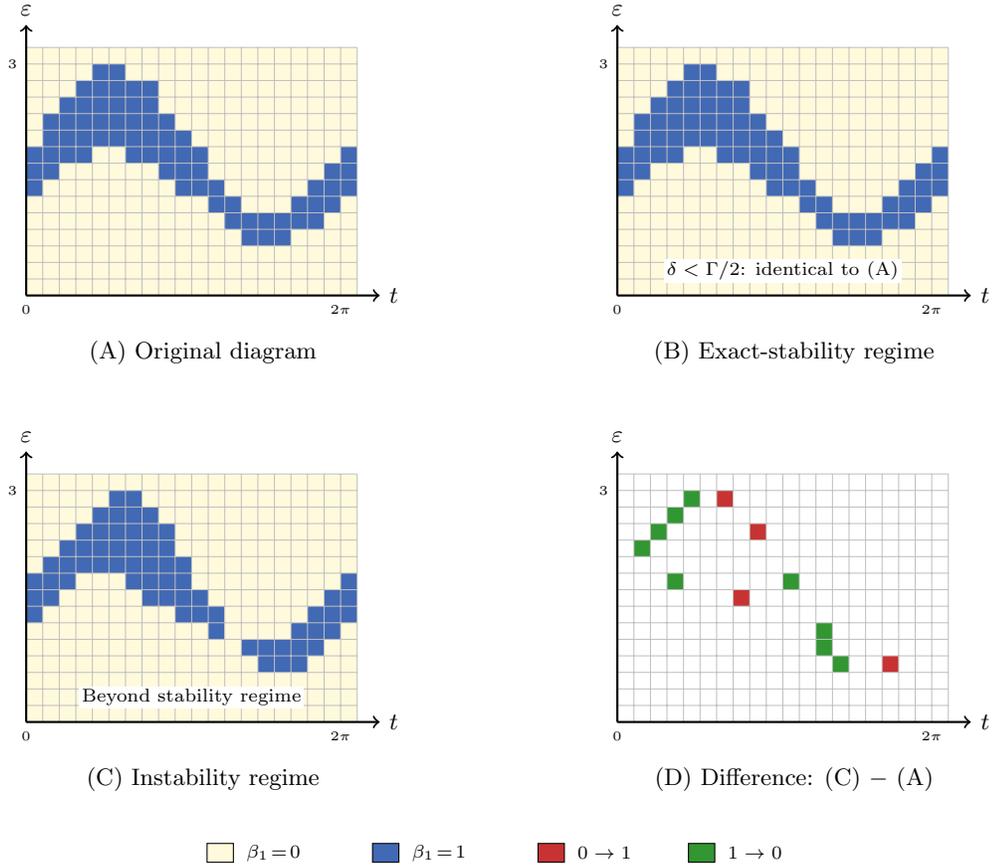
\begin{figure}[ht!]
\centering

\definecolor{betti0}{RGB}{255,250,220}    
\definecolor{betti1}{RGB}{65,105,180}     
\definecolor{diffhi}{RGB}{200,50,50}      
\definecolor{difflo}{RGB}{50,150,50}      

\def\nx{20}      
\def\ny{15}      
\def\cellw{0.22} 
\def\cellh{0.22} 
\def\epsmax{3.0} 
\def\twopi{6.283185307179586} 

\def\cOneConst{1.1755705045849463} 
\def\cTwoConst{1.9021130325903071} 

\def\perturbAmp{0.16}

\begin{subfigure}[t]{0.48\textwidth}
\centering
\begin{tikzpicture}[scale=1]
  \foreach \i in {0,...,19} {
    \foreach \j in {0,...,14} {
      \pgfmathsetmacro{\tval}{\twopi*\i/(\nx-1)}
      \pgfmathsetmacro{\aval}{1 + 0.5*sin(\tval r)}
      \pgfmathsetmacro{\cone}{\cOneConst*\aval}
      \pgfmathsetmacro{\ctwo}{\cTwoConst*\aval}
      \pgfmathsetmacro{\epsval}{\epsmax*\j/(\ny-1)}
      \pgfmathtruncatemacro{\betaone}{and(\epsval > \cone, \epsval < \ctwo)}
      \ifnum\betaone=1
        \fill[betti1] (\i*\cellw, \j*\cellh) rectangle +(\cellw, \cellh);
      \else
        \fill[betti0] (\i*\cellw, \j*\cellh) rectangle +(\cellw, \cellh);
      \fi
    }
  }
  
  \draw[gray!50, very thin] (0,0) grid[step=\cellw] (20*\cellw, 15*\cellh);
  
  \draw[thick, ->] (0,0) -- (20*\cellw + 0.3, 0) node[right] {\small $t$};
  \draw[thick, ->] (0,0) -- (0, 15*\cellh + 0.3) node[above] {\small $\varepsilon$};
  
  \node[below, font=\tiny] at (0,0) {$0$};
  \node[below, font=\tiny] at (19*\cellw,0) {$2\pi$};
  \node[left, font=\tiny] at (0, 14*\cellh) {$3$};
  
\end{tikzpicture}
\caption*{\small (A) Original diagram}
\end{subfigure}
\hfill
%
\begin{subfigure}[t]{0.48\textwidth}
\centering
\begin{tikzpicture}[scale=1]
  \foreach \i in {0,...,19} {
    \foreach \j in {0,...,14} {
      \pgfmathsetmacro{\tval}{\twopi*\i/(\nx-1)}
      \pgfmathsetmacro{\aval}{1 + 0.5*sin(\tval r)}
      \pgfmathsetmacro{\cone}{\cOneConst*\aval}
      \pgfmathsetmacro{\ctwo}{\cTwoConst*\aval}
      \pgfmathsetmacro{\epsval}{\epsmax*\j/(\ny-1)}
      \pgfmathtruncatemacro{\betaone}{and(\epsval > \cone, \epsval < \ctwo)}
      \ifnum\betaone=1
        \fill[betti1] (\i*\cellw, \j*\cellh) rectangle +(\cellw, \cellh);
      \else
        \fill[betti0] (\i*\cellw, \j*\cellh) rectangle +(\cellw, \cellh);
      \fi
    }
  }
  
  \draw[gray!50, very thin] (0,0) grid[step=\cellw] (20*\cellw, 15*\cellh);
  
  \draw[thick, ->] (0,0) -- (20*\cellw + 0.3, 0) node[right] {\small $t$};
  \draw[thick, ->] (0,0) -- (0, 15*\cellh + 0.3) node[above] {\small $\varepsilon$};
  
  \node[below, font=\tiny] at (0,0) {$0$};
  \node[below, font=\tiny] at (19*\cellw,0) {$2\pi$};
  \node[left, font=\tiny] at (0, 14*\cellh) {$3$};
  
  \node[font=\scriptsize, fill=white, inner sep=1pt] at (10*\cellw, 1.5*\cellh) 
    {$\delta < \Gamma/2$: identical to (A)};
    
\end{tikzpicture}
\caption*{\small (B) Exact-stability regime}
\end{subfigure}

\vspace{1em}

\begin{subfigure}[t]{0.48\textwidth}
\centering
\begin{tikzpicture}[scale=1]
  \foreach \i in {0,...,19} {
    \foreach \j in {0,...,14} {
      \pgfmathsetmacro{\tval}{\twopi*\i/(\nx-1)}
      \pgfmathsetmacro{\aval}{1 + 0.5*sin(\tval r)}
      \pgfmathsetmacro{\perturb}{\perturbAmp*sin(2*\tval r)}
      \pgfmathsetmacro{\cone}{\cOneConst*\aval + \perturb}
      \pgfmathsetmacro{\ctwo}{\cTwoConst*\aval - \perturb}
      \pgfmathsetmacro{\epsval}{\epsmax*\j/(\ny-1)}
      \pgfmathtruncatemacro{\betaone}{and(\epsval > \cone, \epsval < \ctwo)}
      \ifnum\betaone=1
        \fill[betti1] (\i*\cellw, \j*\cellh) rectangle +(\cellw, \cellh);
      \else
        \fill[betti0] (\i*\cellw, \j*\cellh) rectangle +(\cellw, \cellh);
      \fi
    }
  }
  
  \draw[gray!50, very thin] (0,0) grid[step=\cellw] (20*\cellw, 15*\cellh);
  
  \draw[thick, ->] (0,0) -- (20*\cellw + 0.3, 0) node[right] {\small $t$};
  \draw[thick, ->] (0,0) -- (0, 15*\cellh + 0.3) node[above] {\small $\varepsilon$};
  
  \node[below, font=\tiny] at (0,0) {$0$};
  \node[below, font=\tiny] at (19*\cellw,0) {$2\pi$};
  \node[left, font=\tiny] at (0, 14*\cellh) {$3$};
  
  \node[font=\scriptsize, fill=white, inner sep=1pt] at (10*\cellw, 1.5*\cellh) 
    {Beyond stability regime};
    
\end{tikzpicture}
\caption*{\small (C) Instability regime}
\end{subfigure}
\hfill
%
\begin{subfigure}[t]{0.48\textwidth}
\centering
\begin{tikzpicture}[scale=1]
  \foreach \i in {0,...,19} {
    \foreach \j in {0,...,14} {
      \pgfmathsetmacro{\tval}{\twopi*\i/(\nx-1)}
      \pgfmathsetmacro{\aval}{1 + 0.5*sin(\tval r)}
      
      \pgfmathsetmacro{\coneOrig}{\cOneConst*\aval}
      \pgfmathsetmacro{\ctwoOrig}{\cTwoConst*\aval}
      
      \pgfmathsetmacro{\perturb}{\perturbAmp*sin(2*\tval r)}
      \pgfmathsetmacro{\conePert}{\cOneConst*\aval + \perturb}
      \pgfmathsetmacro{\ctwoPert}{\cTwoConst*\aval - \perturb}
      
      \pgfmathsetmacro{\epsval}{\epsmax*\j/(\ny-1)}
      
      \pgfmathtruncatemacro{\betaOrig}{and(\epsval > \coneOrig, \epsval < \ctwoOrig)}
      \pgfmathtruncatemacro{\betaPert}{and(\epsval > \conePert, \epsval < \ctwoPert)}
      
      \pgfmathtruncatemacro{\diff}{\betaPert - \betaOrig}
      \ifnum\diff=1
        \fill[diffhi] (\i*\cellw, \j*\cellh) rectangle +(\cellw, \cellh);
      \else
        \ifnum\diff=-1
          \fill[difflo] (\i*\cellw, \j*\cellh) rectangle +(\cellw, \cellh);
        \else
          \fill[white] (\i*\cellw, \j*\cellh) rectangle +(\cellw, \cellh);
        \fi
      \fi
    }
  }
  
  \draw[gray!50, very thin] (0,0) grid[step=\cellw] (20*\cellw, 15*\cellh);
  
  \draw[thick, ->] (0,0) -- (20*\cellw + 0.3, 0) node[right] {\small $t$};
  \draw[thick, ->] (0,0) -- (0, 15*\cellh + 0.3) node[above] {\small $\varepsilon$};
  
  \node[below, font=\tiny] at (0,0) {$0$};
  \node[below, font=\tiny] at (19*\cellw,0) {$2\pi$};
  \node[left, font=\tiny] at (0, 14*\cellh) {$3$};
  
\end{tikzpicture}
\caption*{\small (D) Difference: (C) $-$ (A)}
\end{subfigure}

\vspace{0.5em}

\centering
\begin{tikzpicture}
  \fill[betti0] (0,0) rectangle (0.35,0.25);
  \draw (0,0) rectangle (0.35,0.25);
  \node[right, font=\scriptsize] at (0.4,0.125) {$\beta_1\!=\!0$};
  
  \fill[betti1] (2.2,0) rectangle (2.55,0.25);
  \draw (2.2,0) rectangle (2.55,0.25);
  \node[right, font=\scriptsize] at (2.6,0.125) {$\beta_1\!=\!1$};
  
  \fill[diffhi] (4.4,0) rectangle (4.75,0.25);
  \draw (4.4,0) rectangle (4.75,0.25);
  \node[right, font=\scriptsize] at (4.8,0.125) {$0 \to 1$};
  
  \fill[difflo] (6.4,0) rectangle (6.75,0.25);
  \draw (6.4,0) rectangle (6.75,0.25);
  \node[right, font=\scriptsize] at (6.8,0.125) {$1 \to 0$};
\end{tikzpicture}

\caption{Crocker diagrams for the breathing pentagon ($m=5$), displaying $\beta_1(\varepsilon, t)$ on a discrete grid of scales and times. (A)~The unperturbed Crocker diagram shows a characteristic band where $\beta_1=1$, corresponding to the loop that exists when the scale satisfies $c_1(t) < \varepsilon < c_2(t)$, where $c_1(t)=2a(t)\sin(\pi/5)$ and $c_2(t)=2a(t)\sin(2\pi/5)$. (B)~In the exact-stability regime of Theorem~\ref{thm:exact_stability} (i.e., $\delta < \Gamma/2$ where $\Gamma$ is the grid clearance), the Crocker diagram is unchanged; we display an identical diagram to emphasize the cell-for-cell invariance guaranteed by the theorem. (C)~To illustrate how changes become possible once we move beyond the exact-stability regime, we display a perturbed instance in which the critical curves are shifted enough to cross some grid boundaries, producing localized changes in $\beta_1$. (D)~The entrywise difference (C)$-$(A) highlights where grid cells change: red cells gained a loop ($0\to1$) and green cells lost one ($1\to0$). These diagrams are illustrative demonstrations of the stability theory in a controlled setting, not empirical validation from experimental data.}
\label{fig:crocker-stability}
\end{figure}

We now turn to an example motivated by biological experiments, where the structure of the distance spectrum differs fundamentally from the polygon case.

\subsection{Feasibility Analysis for Epithelial Cell Imaging}
\label{sec:cellmigration}

Topological data analysis of collective cell behavior is an ideal proving ground for our stability theory because interpreting experimental results hinges on precise cell position measurements. Epithelial systems---the protective cellular layers that cover surfaces and line cavities in the body---form continuous sheets whose collective migration is critical in wound healing, embryonic development, and pathologies such as cancer metastasis~\cite{Rorth2009}. In this section, we demonstrate how our stability bounds translate into concrete experimental design constraints, with an important caveat: unlike the breathing polygon, large biological point clouds have dense distance spectra that make the exact-stability criterion difficult to verify. We therefore focus on the bounded-change theorem (Theorem~\ref{thm:global_bound}) as the appropriate tool for this setting.

Figure~\ref{fig:epithelial-schematic} provides geometric intuition for the analysis below: $\delta$ represents centroid localization uncertainty, while the local density parameter $\Lambda_\delta$ controls how topological changes propagate.


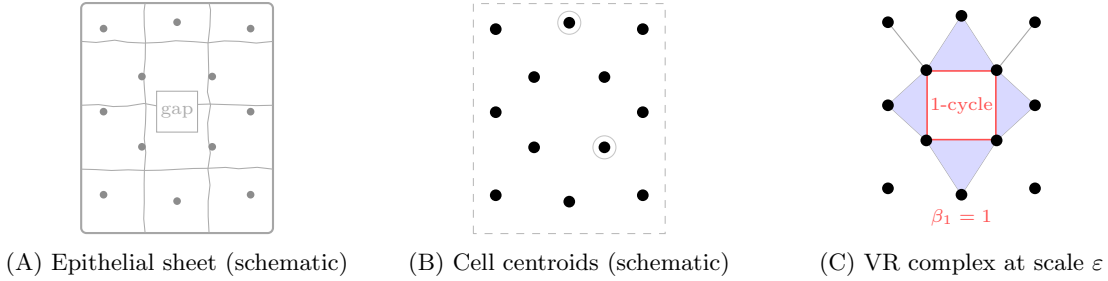
\begin{figure}[ht!]
\centering

\begin{subfigure}[t]{0.31\textwidth}
\centering
\begin{tikzpicture}[scale=0.85]
  \draw[thick, gray!70, rounded corners=2pt] (0,0) rectangle (3.0,3.6);

  \draw[gray!65, decorate, decoration={random steps, segment length=6pt, amplitude=0.7pt}]
      (1.0,0.0) -- (1.0,3.6);
  \draw[gray!65, decorate, decoration={random steps, segment length=6pt, amplitude=0.7pt}]
      (2.0,0.0) -- (2.0,3.6);

  \draw[gray!65, decorate, decoration={random steps, segment length=6pt, amplitude=0.7pt}]
      (0.0,1.0) -- (3.0,1.0);
  \draw[gray!65, decorate, decoration={random steps, segment length=6pt, amplitude=0.7pt}]
      (0.0,2.0) -- (3.0,2.0);
  \draw[gray!65, decorate, decoration={random steps, segment length=6pt, amplitude=0.7pt}]
      (0.0,3.0) -- (3.0,3.0);

  \fill[white] (1.18,1.58) rectangle (1.82,2.22);
  \draw[gray!65] (1.18,1.58) rectangle (1.82,2.22);
  \node[gray!65, font=\scriptsize] at (1.50,1.90) {gap};

  \coordinate (L0) at (0.35,0.60);
  \coordinate (M0) at (1.50,0.50);
  \coordinate (R0) at (2.65,0.60);

  \coordinate (P1) at (0.95,1.35);
  \coordinate (P2) at (2.05,1.35);
  \coordinate (P3) at (2.05,2.45);
  \coordinate (P4) at (0.95,2.45);

  \coordinate (L1) at (0.35,1.90);
  \coordinate (R1) at (2.65,1.90);

  \coordinate (L2) at (0.35,3.20);
  \coordinate (M2) at (1.50,3.30);
  \coordinate (R2) at (2.65,3.20);

  \foreach \pt in {L0,M0,R0,P1,P2,P3,P4,L1,R1,L2,M2,R2} {
    \fill[black!45] (\pt) circle[radius=1.7pt];
  }

\end{tikzpicture}
\caption*{\small (A) Epithelial sheet (schematic)}
\end{subfigure}\hfill
%
\begin{subfigure}[t]{0.31\textwidth}
\centering
\begin{tikzpicture}[scale=0.85]
  \draw[gray!55, dashed] (0,0) rectangle (3.0,3.6);

  \coordinate (L0) at (0.35,0.60);
  \coordinate (M0) at (1.50,0.50);
  \coordinate (R0) at (2.65,0.60);

  \coordinate (P1) at (0.95,1.35);
  \coordinate (P2) at (2.05,1.35);
  \coordinate (P3) at (2.05,2.45);
  \coordinate (P4) at (0.95,2.45);

  \coordinate (L1) at (0.35,1.90);
  \coordinate (R1) at (2.65,1.90);

  \coordinate (L2) at (0.35,3.20);
  \coordinate (M2) at (1.50,3.30);
  \coordinate (R2) at (2.65,3.20);

  \foreach \pt in {L0,M0,R0,P1,P2,P3,P4,L1,R1,L2,M2,R2} {
    \fill[black] (\pt) circle[radius=2.6pt];
  }

  \draw[gray!45, thin] (P2) circle[radius=0.18];
  \draw[gray!45, thin] (M2) circle[radius=0.18];

\end{tikzpicture}
\caption*{\small (B) Cell centroids (schematic)}
\end{subfigure}\hfill
%
\begin{subfigure}[t]{0.31\textwidth}
\centering
\begin{tikzpicture}[scale=0.85]
  \coordinate (L0) at (0.35,0.60);
  \coordinate (M0) at (1.50,0.50);
  \coordinate (R0) at (2.65,0.60);

  \coordinate (P1) at (0.95,1.35);
  \coordinate (P2) at (2.05,1.35);
  \coordinate (P3) at (2.05,2.45);
  \coordinate (P4) at (0.95,2.45);

  \coordinate (L1) at (0.35,1.90);
  \coordinate (R1) at (2.65,1.90);

  \coordinate (L2) at (0.35,3.20);
  \coordinate (M2) at (1.50,3.30);
  \coordinate (R2) at (2.65,3.20);

  \draw[gray!70] (M0) -- (P1);
  \draw[gray!70] (M0) -- (P2);
  \draw[gray!70] (L1) -- (P1);
  \draw[gray!70] (L1) -- (P4);
  \draw[gray!70] (R1) -- (P2);
  \draw[gray!70] (R1) -- (P3);
  \draw[gray!70] (M2) -- (P4);
  \draw[gray!70] (M2) -- (P3);

  \draw[gray!70] (L2) -- (P4);
  \draw[gray!70] (R2) -- (P3);

  \draw[red!70, very thick] (P1) -- (P2) -- (P3) -- (P4) -- cycle;

  \fill[blue!15] (M0) -- (P1) -- (P2) -- cycle;
  \fill[blue!15] (L1) -- (P1) -- (P4) -- cycle;
  \fill[blue!15] (R1) -- (P2) -- (P3) -- cycle;
  \fill[blue!15] (M2) -- (P3) -- (P4) -- cycle;

  \foreach \pt in {L0,M0,R0,P1,P2,P3,P4,L1,R1,L2,M2,R2} {
    \fill[black] (\pt) circle[radius=2.6pt];
  }

  \node[red!70, font=\scriptsize] at (1.50,1.90) {$1$-cycle};
  \node[red!70, font=\scriptsize] at (1.50,0.15) {$\beta_1=1$};

\end{tikzpicture}
\caption*{\small (C) VR complex at scale $\varepsilon$}
\end{subfigure}

\caption{Schematic pipeline for the feasibility calculation in Section~\ref{sec:cellmigration}. \textbf{(A)}~A cartoon epithelial sheet (confluent patch) with a small local sparse region. \textbf{(B)}~Corresponding centroid locations (schematic); light circles illustrate an example localization uncertainty of radius $\delta$ around selected centroids. \textbf{(C)}~A schematic Vietoris--Rips complex at a chosen scale $\varepsilon$: edges connect centroids whose pairwise distance is at most $\varepsilon$, and filled triangles indicate 2-simplices. A highlighted interior 1-cycle contributes to $\beta_1$.}
\label{fig:epithelial-schematic}
\end{figure}

At the tissue scale, epithelial cells do not maintain fixed neighbor relationships. Unlike crystalline lattices, these sheets continually rearrange, so purely geometric summaries---cell density, average spacing, and the like---miss the neighbor-switching events that drive collective behavior. Topological descriptors, by contrast, are built to track changing connectivity. Here we use the first Betti number, $\beta_1$, which counts loops in the centroid network and thus records how voids open or close as cells move. Bonilla et al.~\cite{Bonilla2020} previously demonstrated that $\beta_1$ quantifies the complexity of cell interfaces and the formation of multi-cellular holes as epithelial sheets rearrange.

The exact-stability theorem (Theorem~\ref{thm:exact_stability}) requires computing the grid clearance $\Gamma$, which in turn requires characterizing how the distance spectrum sits relative to the fixed grid. For $m \approx 500$ cells, there are $\binom{500}{2} \approx 125{,}000$ pairwise distances. In a generic disordered configuration, these distances are nearly all distinct, making the minimum gap $\Delta$ between consecutive distances extremely small---potentially comparable to or smaller than the measurement precision. Moreover, the in-gap condition (that each grid value $\widehat{\varepsilon}_j$ lies strictly between consecutive critical distances at every time) becomes difficult to verify and may yield a clearance $\Gamma$ so small that the exact-stability criterion becomes vacuous for practical noise levels. This contrasts sharply with the breathing polygon, where symmetry reduces $\binom{m}{2}$ pairs to only $\lfloor m/2 \rfloor$ distinct distance families, yielding a well-separated spectrum amenable to exact-stability analysis.

For large biological point clouds, the bounded-change theorem (Theorem~\ref{thm:global_bound}) provides the appropriate stability guarantee. This theorem makes no assumptions about the distance spectrum; it bounds the total Betti-number change in terms of local density $\Lambda_\delta$, the number of perturbed points $m^*$, and the grid size.

To apply this bound, we perform a thought experiment based on Park \textit{et al.}'s work~\cite{Park2015}, which imaged human bronchial epithelial sheets every three minutes for 150 minutes, yielding $n_t = 51$ time points. We assume $m \approx 500$ centroids per frame and select Vietoris--Rips scales $\widehat{\varepsilon}_j = 20j\,\mu\text{m}$ for $j=1,\dots,6$, constituting roughly 1--6 cell diameters as in~\cite{Bonilla2020}.

We first estimate the centroid-localization uncertainty $\delta$. Recent tracking benchmarks suggest position errors can be reliably bounded by three pixels under controlled conditions~\cite{Chenouard2014,Ulman2017,Holme2023}. With Park's imaging resolution of $0.44~\mu\text{m}$ per pixel~\cite{Park2015}, this gives:
\begin{equation}
\delta \approx 3 \text{ pixels} \times 0.44~\mu\text{m/pixel} = 1.3~\mu\text{m}.
\end{equation}

Since the bounded-change theorem (Theorem~\ref{thm:global_bound}) uses the $\delta$-inflated density $\Lambda_\delta(\widehat{\varepsilon}_j)$, we must account for the perturbation magnitude. With $\delta \approx 1.3~\mu\text{m}$ and scales $\widehat{\varepsilon}_j = 20j~\mu\text{m}$, the inflation $2\delta \approx 2.6~\mu\text{m}$ is small compared to the scale values (at most 13\% of the smallest scale). For hexagonal packing with cell diameter $\phi \approx 15~\mu\text{m}$, this inflation slightly increases the neighbor counts but does not qualitatively change the analysis. We therefore proceed with the unperturbed estimates $\Lambda(\widehat{\varepsilon}_j) = 1 + 3j(j+1)$ as a reasonable approximation, noting that the true bounds using $\Lambda_\delta$ are slightly larger.

Assuming a hexagonal packing of cells with diameter $\phi \approx 15\,\mu\text{m}$---the densest planar arrangement---the neighbor count within $j$ rings is $\Lambda(\widehat{\varepsilon}_j)=1+3j(j+1)$~\cite{ConwaySloane1999}, giving $7,\,19,\,37,\,61,\,91,$ and $127$ neighbors for $j=1$ through $6$, respectively. Here we treat $j$ as an effective ring index for the purpose of a back-of-the-envelope bound; the exact conversion from the physical scale $\widehat{\varepsilon}_j$ to a ring count depends on the centroid spacing. The hexagonal-packing count is used only as a worst-case local-density envelope, not as a model of the observed tissue geometry.

For $k=1$, the per-cell bound from Proposition~\ref{prop:betti_change} is $\binom{\Lambda_\delta}{2}$. At the largest scale ($j=6$, $\Lambda \approx 127$), this gives $\binom{127}{2} = 8001$ as the worst-case $\beta_1$ change per perturbed vertex per grid cell. Summing over all scales:
\begin{equation}
\sum_{j=1}^{6} \binom{\Lambda(\widehat{\varepsilon}_j)}{2} = \binom{7}{2} + \binom{19}{2} + \binom{37}{2} + \binom{61}{2} + \binom{91}{2} + \binom{127}{2} \approx 14{,}800.
\end{equation}

If every centroid is perturbed ($m^* = 500$) across $n_t = 51$ frames, the global bound becomes approximately $51 \times 500 \times 14{,}800 \approx 3.8 \times 10^8$, or an average of about $1.2 \times 10^6$ per Crocker cell. This worst-case bound is extremely conservative---it assumes every possible simplex change occurs in the worst possible direction.

In practice, topological changes are far more localized. The bound's value lies not in its tightness but in its \emph{scaling}: it depends on local density $\Lambda_\delta$ rather than global point count $m$, explaining why Crocker diagrams remain informative even for large point clouds. Moreover, the bound decreases rapidly at smaller scales where $\Lambda_\delta$ is smaller, suggesting that low-scale topological features are more robust than high-scale ones.

The bounded-change theorem provides concrete guidance for experimental design. Higher imaging resolution reduces $\delta$, which tightens $\Lambda_\delta(\widehat{\varepsilon}_j)$ and thus tightens the worst-case bound; moreover, smaller perturbations also reduce the \emph{actual} number of edge-status changes in practice, improving empirical stability. Crocker grids focused on smaller scales (where $\Lambda_\delta$ is smaller) yield more robust topological summaries, as reflected in the tighter bounds at low $j$. Sparser cell configurations have smaller $\Lambda_\delta$ values and hence tighter stability bounds.

When resolution is limiting, established remedies such as deconvolution-based deblurring~\cite{Sarder2006,Sibarita2005} and super-resolution microscopy~\cite{Betzig2006,Schermelleh2010} can drive the localization error $\delta$ downward, which tightens $\Lambda_\delta$ and improves both theoretical bounds and empirical stability.

In summary, while exact stability (Theorem~\ref{thm:exact_stability}) provides the strongest guarantee, it requires a well-separated distance spectrum that large biological point clouds typically lack. The bounded-change theorem (Theorem~\ref{thm:global_bound}) fills this gap, providing universal guarantees that explain the empirical robustness of Crocker diagrams and guide experimental design even when exact-stability conditions cannot be verified.

\section{Probabilistic Stability to Random Perturbations}
\label{sec:probabilistic}

In this section we develop a probabilistic framework for Crocker diagrams subjected to random Gaussian perturbations of the underlying point cloud. When multiple independent error sources combine---as is common in experimental settings---their cumulative effect tends to follow a Gaussian distribution, as predicted by the central limit theorem. Two key parameters anchor the theory: (1) the noise standard deviation $\sigma$ that quantifies uncertainty, and (2) the grid-threshold clearance $\Gamma_{\mathrm{grid}}$ that measures how far pairwise distances sit from grid values.

We first prove a fundamental stability criterion: if the grid-threshold clearance satisfies $\Gamma_{\mathrm{grid}} > \sqrt{2}\sigma\sqrt{d}$, the probability of topological changes decays exponentially with $(\Gamma_{\mathrm{grid}}/\sigma)^2$. When this condition is met, we derive bounds showing that the total failure probability depends on the number of points, time steps, and scale values in the Crocker grid. As we will show, these results translate statistical measurement errors into confidence levels that can inform experimental design. We revisit our two examples from Section~\ref{sec:deterministic}: first, we add Gaussian noise to the breathing pentagon to demonstrate the probabilistic framework in a setting where $\Gamma_{\mathrm{grid}}$ can be computed directly; then, we discuss why probabilistic guarantees face the same scope limitations as deterministic exact stability for large biological point clouds.

\subsection{Theoretical Stability for Random Perturbations}
\label{sec:probabilistic_theory}

We now extend our stability analysis to the probabilistic setting, examining how random perturbations affect the robustness of Crocker diagrams and quantifying the likelihood of topological changes under measurement noise.

Consider a dynamic point cloud $P(t) = \{p_1(t), \dots, p_m(t)\}$ subject to random perturbations. We model the observed positions as:
\begin{equation}
\tilde{p}_a(t) = p_a(t) + \xi_a(t),
\end{equation}
where each $\xi_a(t)$ is an isotropic Gaussian random vector with zero mean and covariance matrix $\sigma^2 I_d$. This models independent noise of equal magnitude in each coordinate direction, with temporal independence between consecutive observations. Our goal is to determine the probability that these random perturbations cause a change in the Crocker diagram.

A Crocker-diagram entry can change only if, for some sampled time $t_i$ and grid scale $\widehat{\varepsilon}_j$, the perturbation causes at least one critical distance to move across $\widehat{\varepsilon}_j$---equivalently, the ordering of $\widehat{\varepsilon}_j$ relative to the critical-distance set at $t_i$ changes.

For any pair of points $(a,b)$, let $s_{ab}(t) = \|p_a(t) - p_b(t)\|$ denote the original distance and $\tilde{s}_{ab}(t) = \|\tilde{p}_a(t) - \tilde{p}_b(t)\|$ the perturbed distance. By the reverse triangle inequality, the change in distance satisfies:
\begin{equation}
|s_{ab}(t) - \tilde{s}_{ab}(t)| \leq \|\xi_a(t) - \xi_b(t)\|.
\end{equation}
Equality holds when the perturbations are collinear with $p_a(t) - p_b(t)$.

Since the change in distance is bounded by $\|\xi_a(t) - \xi_b(t)\|$, we need to characterize the distribution of this quantity. When $\xi_a(t)$ and $\xi_b(t)$ are independent Gaussian vectors, their difference follows a specific distribution:

\begin{proposition}[Distribution of Perturbation Differences; see, e.g.,~\cite{Vershynin2018}]
\label{prop:perturbation_diff}
Let $\xi_a(t)$ and $\xi_b(t)$ be independent isotropic Gaussian random vectors in $\mathbb{R}^d$ with zero mean and covariance matrix $\sigma^2 I_d$. Then:
\begin{equation}
\frac{\|\xi_a(t) - \xi_b(t)\|}{\sqrt{2}\sigma} \sim \chi_d,
\end{equation}
where $\chi_d$ denotes the chi distribution with $d$ degrees of freedom.
\end{proposition}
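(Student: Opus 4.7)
The plan is to reduce the claim to the defining property of the chi distribution in two elementary steps: identify the distribution of the difference vector, then normalize to a standard Gaussian.

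First, I would observe that $\eta := \xi_a(t) - \xi_b(t)$ is itself a Gaussian vector, since any linear combination of independent Gaussian vectors is Gaussian. Its mean is zero by linearity of expectation, and because $\xi_a(t)$ and $\xi_b(t)$ are independent with common covariance $\sigma^2 I_d$, the covariance of the difference adds to give $\sigma^2 I_d + \sigma^2 I_d = 2\sigma^2 I_d$. Hence $\eta \sim \mathcal{N}(0, 2\sigma^2 I_d)$. The only point requiring care here is the covariance accounting for the sum of an independent Gaussian and the negation of another independent Gaussian; negation preserves the covariance since $\operatorname{Cov}(-\xi_b(t)) = (-1)^2 \sigma^2 I_d$.

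Second, I would scale: the vector $Z := \eta/(\sqrt{2}\sigma)$ is a linear transformation of a Gaussian and therefore Gaussian, with mean zero and covariance $(\sqrt{2}\sigma)^{-2} \cdot 2\sigma^2 I_d = I_d$. Thus $Z \sim \mathcal{N}(0, I_d)$ is a standard isotropic Gaussian in $\mathbb{R}^d$. The Euclidean norm of such a vector is, by definition, distributed as the chi distribution $\chi_d$ with $d$ degrees of freedom (equivalently, $\|Z\|^2 = Z_1^2 + \cdots + Z_d^2 \sim \chi^2_d$ is a sum of squares of $d$ iid standard normals, and $\chi_d$ is the law of its square root). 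Since $\|Z\| = \|\eta\|/(\sqrt{2}\sigma) = \|\xi_a(t) - \xi_b(t)\|/(\sqrt{2}\sigma)$, this yields the claim.

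There is no substantial obstacle here: the proposition is a textbook consequence of the additive and affine-invariance properties of the Gaussian family combined with the definition of the chi distribution. The only practical pitfall is a factor-of-two error when forming the variance of a difference of two independent Gaussians, which is why the normalizing constant is $\sqrt{2}\sigma$ rather than $\sigma$.
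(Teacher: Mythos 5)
Your proof is correct and complete: the paper itself offers no proof of this proposition, citing it as a standard fact from Vershynin, and your two-step argument (difference of independent Gaussians is $\mathcal{N}(0,2\sigma^2 I_d)$, then normalize so the norm is $\chi_d$ by definition) is exactly the textbook derivation that citation points to. The factor-of-two bookkeeping you flag is indeed the only place to go wrong, and you handle it correctly.
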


From our deterministic analysis, we know that exact cell-by-cell invariance requires that no critical distance cross any grid threshold $\widehat{\varepsilon}_j$, a condition equivalently expressed by $\delta < \Gamma$. In the probabilistic setting, however, this uniform clearance condition may fail ($\Gamma=0$) even though most grid cells are well separated from critical distances. To quantify stability in this regime, we therefore work with cell-level clearance quantities that remain well-defined without requiring a positive global minimum.

\begin{definition}[Grid-Threshold Clearance]
\label{def:prob_clearance}
The \emph{grid-threshold clearance} is
\begin{equation}
\Gamma_{\mathrm{grid}} := \min_{i=1,\ldots,n_t} \min_{j=1,\ldots,n_\varepsilon} \min_{a<b} \bigl| \|p_a(t_i) - p_b(t_i)\| - \widehat{\varepsilon}_j \bigr|,
\end{equation}
the minimum distance from any pairwise distance to any grid threshold, across all sampled times.
\end{definition}

If $\Gamma_{\mathrm{grid}} > 0$ and all pairwise distance changes remain below $\Gamma_{\mathrm{grid}}$, no grid threshold can be crossed, and the Crocker diagram is unchanged. We need to bound $\mathbb{P}[\|\xi_a(t) - \xi_b(t)\| > \Gamma_{\mathrm{grid}}]$ using the $\chi$ distribution. The following concentration inequality gives us the tool we need:

\begin{theorem}[Concentration Bound~\cite{Laurent2000}]
\label{thm:concentration}
For any $\tau \geq 0$, we have
\begin{equation}
\mathbb{P}[\|\xi_a(t) - \xi_b(t)\| \geq \sqrt{2}\sigma(\sqrt{d} + \tau)] \leq e^{-\tau^2/2}.
\end{equation}
\end{theorem}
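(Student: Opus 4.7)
The plan is to reduce the statement to a concentration inequality for the Euclidean norm of a standard Gaussian vector, and then invoke Gaussian (Lipschitz) concentration. First I would standardize: by the independence and isotropy of $\xi_a(t)$ and $\xi_b(t)$, the difference $\xi_a(t)-\xi_b(t)$ is a zero-mean Gaussian vector with covariance $2\sigma^2 I_d$. Thus $Z := (\xi_a(t)-\xi_b(t))/(\sqrt{2}\sigma)$ is standard Gaussian in $\mathbb{R}^d$, and by Proposition~\ref{prop:perturbation_diff} its norm satisfies $\|Z\|\sim\chi_d$. The desired inequality is then equivalent to showing
\begin{equation}
\mathbb{P}\bigl[\|Z\| \ge \sqrt{d} + \tau\bigr] \le e^{-\tau^2/2}.
\end{equation}

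Next I would observe two elementary facts. The function $f(z) := \|z\|$ is $1$-Lipschitz with respect to the Euclidean norm on $\mathbb{R}^d$, since by the reverse triangle inequality $\bigl|\|z\|-\|z'\|\bigr|\le \|z-z'\|$. And by Jensen's inequality,
\begin{equation}
\mathbb{E}\|Z\| \le \sqrt{\mathbb{E}\|Z\|^2} = \sqrt{d}.
\end{equation}
With these two ingredients, I would apply the Borell--Tsirelson--Ibragimov--Sudakov Gaussian concentration inequality: for any $1$-Lipschitz $f:\mathbb{R}^d\to\mathbb{R}$ and $Z\sim N(0,I_d)$,
\begin{equation}
\mathbb{P}\bigl[f(Z) \ge \mathbb{E} f(Z) + \tau\bigr] \le e^{-\tau^2/2}, \qquad \tau \ge 0.
\end{equation}
Applied to $f(z)=\|z\|$ and combined with $\mathbb{E}\|Z\|\le\sqrt{d}$, this yields the displayed inequality, which after unwinding the rescaling $Z\mapsto\sqrt{2}\sigma Z = \xi_a(t)-\xi_b(t)$ gives exactly the statement of the theorem.

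The main obstacle is not computational but citational: the estimate is well-known (it is essentially the Laurent--Massart chi-square bound specialized to a convenient form), and the cleanest proof routes through an off-the-shelf concentration tool. One could instead argue directly from the tail of $\chi_d$: the Laurent--Massart bound $\mathbb{P}[\chi_d^2 \ge d + 2\sqrt{d x} + 2x]\le e^{-x}$ with $x=\tau^2/2$ gives $\chi_d^2 \le d + \sqrt{2d}\,\tau + \tau^2$, which is slightly sharper than $(\sqrt{d}+\tau)^2 = d + 2\sqrt{d}\,\tau + \tau^2$, so the stated form follows a fortiori. Either route is short; the subtlety lies only in choosing a concentration statement whose constants match the factor $\sqrt{2}\sigma(\sqrt{d}+\tau)$ appearing in the theorem, which is why the Gaussian-concentration route, requiring only the two elementary facts above, is the most transparent.
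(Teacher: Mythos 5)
The paper does not actually prove this statement: it is imported wholesale from the cited reference \cite{Laurent2000} and used as a black box, so there is no in-paper argument to compare against. Your proposal supplies a correct, self-contained proof. The standardization is right ($\xi_a(t)-\xi_b(t)\sim N(0,2\sigma^2 I_d)$, so $Z=(\xi_a(t)-\xi_b(t))/(\sqrt{2}\sigma)$ is standard Gaussian), the two ingredients for the Borell--Tsirelson--Ibragimov--Sudakov route are both valid ($z\mapsto\|z\|$ is $1$-Lipschitz; $\mathbb{E}\|Z\|\le\sqrt{\mathbb{E}\|Z\|^2}=\sqrt{d}$ by Jensen), and since $\{\|Z\|\ge\sqrt{d}+\tau\}\subseteq\{\|Z\|\ge\mathbb{E}\|Z\|+\tau\}$ the concentration inequality delivers the bound with the exact constant $e^{-\tau^2/2}$. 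Your alternative route through the Laurent--Massart chi-square tail is also correct: with $x=\tau^2/2$ the threshold $d+\sqrt{2d}\,\tau+\tau^2$ is dominated by $(\sqrt{d}+\tau)^2=d+2\sqrt{d}\,\tau+\tau^2$, so the stated inequality follows a fortiori; this second route is the one that actually matches the paper's citation, while the Gaussian-concentration route is arguably cleaner since it avoids the chi-square algebra entirely. Either argument would serve as a legitimate proof of the theorem the paper merely quotes.
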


To apply this theorem to our stability threshold $\Gamma_{\mathrm{grid}}$, we express it in the form $\sqrt{2}\sigma(\sqrt{d} + \tau)$. This leads us to define:
\begin{equation}
\label{eq:taustar}
\tau^\star = \frac{\Gamma_{\mathrm{grid}}}{\sqrt{2}\sigma} - \sqrt{d}.
\end{equation}
This parameter $\tau^\star$ measures how far the grid-threshold clearance $\Gamma_{\mathrm{grid}}$ exceeds the typical perturbation magnitude $\sqrt{2}\sigma\sqrt{d}$, expressed in units of $\sqrt{2}\sigma$. For the probabilistic bound to be meaningful, we need $\tau^\star > 0$, which is equivalent to
\begin{equation}
\label{eq:taustarzero}
\Gamma_{\mathrm{grid}} > \sqrt{2}\sigma\sqrt{d}.
\end{equation}

This inequality has a direct stability interpretation: the grid-threshold clearance must exceed the typical noise-induced fluctuation in distances. To see why $\sqrt{2}\sigma\sqrt{d}$ represents the typical fluctuation, we return to Proposition~\ref{prop:perturbation_diff}, which established that $\|\xi_a(t) - \xi_b(t)\|/(\sqrt{2}\sigma) \sim \chi_d$. The expected value of a chi-distributed random variable is:
\begin{equation}
\mathbb{E}[\|\xi_a(t)-\xi_b(t)\|]
  = \sqrt{2}\sigma\,
    \frac{\mathrm{Gamma}(\frac{d+1}{2})}
         {\mathrm{Gamma}(\frac{d}{2})}
  \approx \sqrt{2}\sigma\sqrt{d}\left(1-\frac{1}{4d}\right),
\end{equation}
where we have written out the word for the Gamma function to avoid confusion with our grid clearance quantities. Our stability condition~\eqref{eq:taustarzero} requires the grid-threshold clearance to exceed this typical fluctuation magnitude.

When $\tau^\star > 0$, the Laurent--Massart bound implies that the probability of any topological change decays exponentially in $(\tau^\star)^2$:

\begin{proposition}[Single-pair crossing probability bound] 
\label{prop:pair_prob} 
If $\Gamma_{\mathrm{grid}} > \sqrt{2}\sigma\sqrt{d}$, then the probability that a single pair of points experiences a distance change exceeding $\Gamma_{\mathrm{grid}}$ is bounded by:
\begin{equation}
p_{\text{cross}} := \mathbb{P}\left[\|\xi_a(t) - \xi_b(t)\| > \Gamma_{\mathrm{grid}}\right] \leq \exp\left[-\frac{(\tau^\star)^2}{2}\right],
\label{eq:singlepair}
\end{equation}
where $\tau^\star$ is defined in~\eqref{eq:taustar}. Since $|\tilde{s}_{ab}(t) - s_{ab}(t)| \le \|\xi_a(t) - \xi_b(t)\|$, any threshold crossing requires $\|\xi_a(t) - \xi_b(t)\| > \Gamma_{\mathrm{grid}}$; we bound the probability of this necessary condition.
\end{proposition}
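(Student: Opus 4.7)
The plan is to obtain this bound as a direct corollary of the Laurent--Massart concentration inequality (Theorem~\ref{thm:concentration}), since the proposition is essentially a substitution of the parameter $\tau^\star$ defined in~\eqref{eq:taustar} into the already-stated tail bound. First I would verify that $\tau^\star$ is an admissible choice, i.e.\ that $\tau^\star \ge 0$: by the hypothesis $\Gamma_{\mathrm{grid}} > \sqrt{2}\sigma\sqrt{d}$ and the definition $\tau^\star = \Gamma_{\mathrm{grid}}/(\sqrt{2}\sigma) - \sqrt{d}$, we have $\tau^\star > 0$, so Theorem~\ref{thm:concentration} applies.

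Next I would carry out the substitution. By construction,
\begin{equation}
\sqrt{2}\sigma(\sqrt{d} + \tau^\star) \;=\; \sqrt{2}\sigma\sqrt{d} + \Gamma_{\mathrm{grid}} - \sqrt{2}\sigma\sqrt{d} \;=\; \Gamma_{\mathrm{grid}},
\end{equation}
so applying Theorem~\ref{thm:concentration} with $\tau = \tau^\star$ yields
\begin{equation}
\mathbb{P}\bigl[\|\xi_a(t) - \xi_b(t)\| \ge \Gamma_{\mathrm{grid}}\bigr] \;\le\; \exp\!\bigl[-(\tau^\star)^2/2\bigr],
\end{equation}
which is the stated inequality (the event $\{\,\cdot\, > \Gamma_{\mathrm{grid}}\}$ is contained in $\{\,\cdot\, \ge \Gamma_{\mathrm{grid}}\}$).

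Finally I would justify the parenthetical remark about necessity. The reverse triangle inequality applied to $\tilde p_a - \tilde p_b = (p_a - p_b) + (\xi_a - \xi_b)$ gives
\begin{equation}
\bigl|\tilde{s}_{ab}(t) - s_{ab}(t)\bigr| \;\le\; \|\xi_a(t) - \xi_b(t)\|,
\end{equation}
so the event that the perturbed pairwise distance crosses some grid threshold $\widehat{\varepsilon}_j$---which, by Definition~\ref{def:prob_clearance}, requires shifting $s_{ab}(t)$ by more than $\Gamma_{\mathrm{grid}}$---implies $\|\xi_a(t) - \xi_b(t)\| > \Gamma_{\mathrm{grid}}$. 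Thus bounding the probability of the necessary condition bounds the probability of any crossing for this pair.

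There is no real obstacle here: the result is a one-line consequence of Theorem~\ref{thm:concentration} once one recognizes $\tau^\star$ as the parameter that makes $\sqrt{2}\sigma(\sqrt d + \tau) = \Gamma_{\mathrm{grid}}$. The only place to be careful is the interpretation clause: emphasizing that~\eqref{eq:singlepair} bounds a \emph{necessary} condition for a threshold crossing rather than the crossing probability itself, which keeps the statement consistent with the inequality above and avoids an unjustified equality between the two events.
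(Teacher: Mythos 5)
Your proposal is correct and follows essentially the same route as the paper: both proofs simply observe that $\Gamma_{\mathrm{grid}} = \sqrt{2}\sigma(\sqrt{d} + \tau^\star)$ by the definition of $\tau^\star$ and then apply Theorem~\ref{thm:concentration} with $\tau = \tau^\star$. Your additional checks (positivity of $\tau^\star$, the strict-versus-weak inequality, and the reverse-triangle-inequality justification of the necessity clause) are sound elaborations of details the paper leaves implicit.
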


\begin{proof} 
By definition of $\tau^\star$, we have $\Gamma_{\mathrm{grid}} = \sqrt{2}\sigma(\sqrt{d} + \tau^\star)$. Applying Theorem~\ref{thm:concentration}:
\begin{equation} 
\mathbb{P}\left[\|\xi_a(t) - \xi_b(t)\| > \Gamma_{\mathrm{grid}}\right] = \mathbb{P}\left[\|\xi_a(t) - \xi_b(t)\| > \sqrt{2}\sigma(\sqrt{d} + \tau^\star)\right] \leq e^{-(\tau^\star)^2/2}.
\end{equation} 
\end{proof}

This bound applies to a single pair of points at a single time. For the entire Crocker diagram to remain stable, all pairwise distances must avoid crossing any grid threshold across all time points.

\begin{remark}
The grid-threshold clearance $\Gamma_{\mathrm{grid}}$ is related to the deterministic clearance $\Gamma$ in Definition~\ref{def:clearance}. When the in-gap condition holds and all bracketing values are actual pairwise distances (not the boundary extensions $\varepsilon_0 = 0$ or $\varepsilon_{M+1} = +\infty$), we have $\Gamma_{\mathrm{grid}} = \Gamma$. In general, $\Gamma_{\mathrm{grid}} \ge \Gamma$, since $\Gamma$ may use boundary values that are not actual pairwise distances. However, $\Gamma_{\mathrm{grid}}$ is always well-defined, even when some grid values lie outside the range of critical distances. For precise guarantees, $\Gamma_{\mathrm{grid}}$ should be computed directly from the data and grid specification.
\end{remark}

Let $\mathcal{E}$ denote the event that the Crocker diagram changes due to random perturbations. We now establish a global probabilistic bound:

\begin{theorem}[Global Probabilistic Stability]
\label{thm:global_prob}
Let $P(t)$ be a dynamic point cloud with $m$ points observed at $n_t$ time points, and let its Crocker diagram be computed using $n_\varepsilon$ scale values. Suppose each point is perturbed independently at each time step by isotropic Gaussian noise with variance $\sigma^2$. If $\Gamma_{\mathrm{grid}} > \sqrt{2}\sigma\sqrt{d}$, then the probability that the Crocker diagram changes due to noise is bounded by
\begin{equation}
\mathbb{P}[\mathcal{E}] \leq \min\left\{1, \binom{m}{2} n_t \exp\left[-\frac{(\tau^\star)^2}{2}\right] \right\},
\label{eq:prob-stability-bound}
\end{equation}
with $\tau^\star$ as in \eqref{eq:taustar}.
\end{theorem}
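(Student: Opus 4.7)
The plan is to reduce the event $\mathcal{E}$ that the Crocker diagram changes to a union of pair-wise distance-change events, and then apply Proposition~\ref{prop:pair_prob} together with a union bound. First, I would note that because the Vietoris--Rips complex is a flag complex, the Crocker entry at $(\widehat{\varepsilon}_j, t_i)$ is determined by its edge set, which in turn is determined by the indicators $\|p_a(t_i) - p_b(t_i)\| \le \widehat{\varepsilon}_j$ over pairs $(a,b)$. So $\mathcal{E}$ can occur only if, for some quadruple $(a,b,i,j)$, such an indicator flips under the perturbation, i.e., the distance $s_{ab}(t_i)$ crosses the threshold $\widehat{\varepsilon}_j$.

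Next, I would argue that any such crossing forces $\|\xi_a(t_i) - \xi_b(t_i)\| \ge \Gamma_{\mathrm{grid}}$. By Definition~\ref{def:prob_clearance}, we have $|s_{ab}(t_i) - \widehat{\varepsilon}_j| \ge \Gamma_{\mathrm{grid}}$, so a flip requires $|\tilde{s}_{ab}(t_i) - s_{ab}(t_i)| \ge \Gamma_{\mathrm{grid}}$, and the reverse triangle inequality gives $\|\xi_a(t_i) - \xi_b(t_i)\| \ge \Gamma_{\mathrm{grid}}$. Crucially, this lower bound does not depend on $j$: the single event $\|\xi_a(t_i) - \xi_b(t_i)\| \ge \Gamma_{\mathrm{grid}}$ serves as a necessary condition for a threshold crossing at \emph{any} of the $n_\varepsilon$ grid scales simultaneously, which is precisely why $n_\varepsilon$ does not appear in the final bound. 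Hence $\mathcal{E} \subseteq \bigcup_{a<b,\, i} \{\|\xi_a(t_i) - \xi_b(t_i)\| \ge \Gamma_{\mathrm{grid}}\}$, a union over $\binom{m}{2} n_t$ events.

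The final step is to apply Proposition~\ref{prop:pair_prob}, whose hypothesis $\Gamma_{\mathrm{grid}} > \sqrt{2}\sigma\sqrt{d}$ is exactly the assumption of the theorem, to bound each pair-time event by $\exp[-(\tau^\star)^2/2]$. A union bound (which requires no independence) multiplies by $\binom{m}{2} n_t$, and capping at $1$ yields the stated $\min\{1,\cdot\}$ form.

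The main obstacle is the conceptual reduction in the second step---specifically, recognizing that the union over the $n_\varepsilon$ grid values collapses because $\Gamma_{\mathrm{grid}}$ is taken as a minimum over all grid thresholds. Once this observation is in place, the remainder of the argument is a direct concatenation of the single-pair Laurent--Massart estimate with an elementary union bound, and no probabilistic independence structure beyond what is already used in Proposition~\ref{prop:pair_prob} is required.
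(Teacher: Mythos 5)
Your proposal is correct and follows essentially the same route as the paper's proof: reduce $\mathcal{E}$ to the necessary condition $\|\xi_a(t_i)-\xi_b(t_i)\|\ge\Gamma_{\mathrm{grid}}$ for some pair and time (using the definition of $\Gamma_{\mathrm{grid}}$ and the reverse triangle inequality), then apply Proposition~\ref{prop:pair_prob} and a union bound over $\binom{m}{2}n_t$ events, capping at~1. Your explicit remark that the minimum over grid thresholds in Definition~\ref{def:prob_clearance} is what collapses the union over $j$ and keeps $n_\varepsilon$ out of the prefactor is a point the paper leaves implicit, but the argument is the same.
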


\begin{proof}
A change in the Crocker diagram can occur only if some pairwise distance crosses a grid threshold at some time $t_i$. By Definition~\ref{def:prob_clearance}, every pairwise distance is at least $\Gamma_{\mathrm{grid}}$ away from the nearest grid threshold. Thus, a threshold crossing for pair $(a,b)$ at time $t_i$ requires $\|\xi_a(t_i) - \xi_b(t_i)\| \geq \Gamma_{\mathrm{grid}}$. By Proposition~\ref{prop:pair_prob}, this event has probability at most $\exp[-(\tau^\star)^2/2]$. Applying the union bound over $\binom{m}{2}$ pairs and $n_t$ time points:
\begin{equation}
\mathbb{P}[\mathcal{E}] \leq \binom{m}{2} n_t \exp\left[-\frac{(\tau^\star)^2}{2}\right].
\end{equation}
Since probabilities cannot exceed one, we take the minimum with 1.
\end{proof}

The union bound is conservative because crossings at different thresholds are not independent, and because many potential crossings do not actually change Betti numbers. Our bound controls the event that any threshold crossing occurs; absence of threshold crossings is sufficient (but not necessary) for invariance.

The expression \eqref{eq:prob-stability-bound} reveals key insights about stability when $\tau^\star > 0$. First, the failure probability contains $\exp[-(\tau^\star)^2/2]$, showing exponential decay as the clearance $\Gamma_{\mathrm{grid}}$ increases or noise $\sigma$ decreases. Second, the $\sqrt{d}$ term in $\tau^\star$ shows that higher dimension demands proportionally larger clearance to maintain stability. Finally, the prefactor $\binom{m}{2} n_t$ indicates that more complex datasets---with more points or time samples---provide more opportunities for instability.

Together with our deterministic stability results in Section~\ref{sec:stability_theory}, this probabilistic analysis provides a theoretical foundation for understanding the robustness of Crocker diagrams under measurement noise. The exponential decay in the probability bound explains why Crocker diagrams can remain effective even in the presence of noise, provided the grid-threshold clearance is sufficiently large.

\subsection{Analytical Application: Breathing Pentagon with Noise}

To illustrate our probabilistic framework, we return to the breathing pentagon ($m=5$) in $\mathbb{R}^2$ ($d=2$). In Section~\ref{sec:breathing-poly}, we established that the minimum gap between consecutive critical distances is $\Delta \approx 0.363$, occurring at $t = 3\pi/2$ when $a(t) = 1/2$. For probabilistic stability, we must compute the grid-threshold clearance $\Gamma_{\mathrm{grid}}$ (Definition~\ref{def:prob_clearance}).

For the breathing pentagon, the critical distances are $c_1(t) = 2a(t)\sin(\pi/5)$ and $c_2(t) = 2a(t)\sin(2\pi/5)$. As $a(t)$ varies from $1/2$ to $3/2$, these critical distances sweep through the ranges approximately $[0.59, 1.76]$ and $[0.95, 2.85]$ respectively. We select a scale grid with $n_\varepsilon = 15$ values: $\widehat{\varepsilon}_j = 0.1j$ for $j = 1, \ldots, 15$, spanning $[0.1, 1.5]$. We sample $n_t = 51$ evenly spaced time points over $[0, 2\pi]$.

For this specific grid and time discretization, we compute $\Gamma_{\mathrm{grid}}$ by evaluating, for each sampled time $t_i$ and each grid value $\widehat{\varepsilon}_j$, the minimum distance from any pairwise distance to $\widehat{\varepsilon}_j$. Because the regular pentagon has only two distinct pairwise distances at each time ($c_1(t)$ and $c_2(t)$), computing $\Gamma_{\mathrm{grid}}$ reduces to minimizing the distance from each $\widehat{\varepsilon}_j$ to these two curves over sampled times. Numerically, we find:
\begin{equation}
\Gamma_{\mathrm{grid}} \approx 0.032.
\end{equation}
This value is much smaller than $\Delta/2 \approx 0.18$ because the critical distances $c_1(t)$ and $c_2(t)$ vary continuously with time, and at some sampled times they pass close to the fixed grid values.

We first demonstrate that probabilistic exact-stability guarantees are achievable for this dynamic point cloud. Consider Gaussian noise with standard deviation $\sigma = 0.002$. We verify the activation condition~\eqref{eq:taustarzero}:
\begin{equation}
\Gamma_{\mathrm{grid}} = 0.032 > \sqrt{2}\sigma\sqrt{d} = \sqrt{2}(0.002)\sqrt{2} = 0.004,
\end{equation}
which holds with a factor of 8 margin. Computing $\tau^\star$:
\begin{equation}
\tau^\star = \frac{0.032}{\sqrt{2}(0.002)} - \sqrt{2} = 11.3 - 1.41 \approx 9.9.
\end{equation}

With these parameters, the probability bound becomes:
\begin{equation}
\begin{aligned}
\mathbb{P}[\mathcal{E}] &\leq \min\left\{1, \binom{5}{2} \cdot 51 \cdot \exp\left[-\frac{(9.9)^2}{2}\right]\right\} \\
&= \min\left\{1, 510 \cdot \exp[-49.0]\right\} \\
&\approx \min\left\{1, 510 \times 10^{-22}\right\} \\
&\approx 10^{-19}.
\end{aligned}
\end{equation}
This is an exceptionally strong guarantee: even accounting for 510 potential threshold-crossing events across all pairs and times, the probability of any change in the Crocker diagram is negligible.

Now consider what happens as we increase the noise level. With $\sigma = 0.008$:
\begin{equation}
\tau^\star = \frac{0.032}{\sqrt{2}(0.008)} - \sqrt{2} \approx 2.83 - 1.41 = 1.42,
\end{equation}
yielding:
\begin{equation}
\mathbb{P}[\mathcal{E}] \leq \min\left\{1, 510 \cdot \exp[-1.01]\right\} \approx \min\{1, 186\}.
\end{equation}
The bound exceeds 1 and becomes vacuous. This does not mean the Crocker diagram will necessarily change---the union bound is conservative, and in practice the diagram may remain stable. Rather, it means our theoretical framework can no longer guarantee stability at this noise level.

This example illustrates several key insights. First, probabilistic exact-stability guarantees \emph{are} achievable for dynamic point clouds when the noise is sufficiently small relative to the grid-threshold clearance. Second, for dynamic systems where critical distances vary continuously, $\Gamma_{\mathrm{grid}}$ can be substantially smaller than the gap $\Delta$ between critical-distance families, because the moving critical curves may pass close to fixed grid values. Third, the prefactor $\binom{m}{2} n_t$ grows with the number of time samples, so probabilistic guarantees become more demanding as temporal resolution increases---a tradeoff between tracking fidelity and stability certification.

\subsection{Scope Limitations for Large Point Clouds}
\label{sec:cellnoise}

In Section~\ref{sec:cellmigration}, we showed that exact deterministic stability is difficult to verify for large biological point clouds because the grid clearance may be extremely small or impractical to compute. The same limitation applies to probabilistic exact stability: the activation condition $\Gamma_{\mathrm{grid}} > \sqrt{2}\sigma\sqrt{d}$ requires knowing $\Gamma_{\mathrm{grid}}$, which in turn requires evaluating the distance from every pairwise distance to every grid threshold at every sampled time.

For epithelial cell sheets with $m \approx 500$ cells, there are $\binom{500}{2} \approx 125{,}000$ pairwise distances. In a generic disordered configuration, these distances are nearly all distinct, and many will lie close to at least one of the grid thresholds $\widehat{\varepsilon}_j$. Consequently, $\Gamma_{\mathrm{grid}}$ is likely to be extremely small---potentially smaller than any realistic measurement precision $\sigma$. When this occurs, the activation condition $\Gamma_{\mathrm{grid}} > \sqrt{2}\sigma\sqrt{d}$ cannot be satisfied, and the probabilistic bound~\eqref{eq:prob-stability-bound} becomes vacuous.

Even if we could somehow verify the activation condition, the prefactor $\binom{m}{2} n_t$ would be enormous. With $m = 500$ and $n_t = 51$:
\begin{equation}
\binom{500}{2} \cdot 51 \approx 6.4 \times 10^6.
\end{equation}
To overcome this prefactor and achieve a probability bound below 0.01, we would need 
\begin{equation}
\exp[-(\tau^\star)^2/2] < 1.6 \times 10^{-9},
\end{equation}
requiring $\tau^\star > 6.4$. For $d = 2$, using $\Gamma_{\mathrm{grid}} = \sqrt{2}\sigma(\sqrt{d} + \tau^\star)$, this means $\Gamma_{\mathrm{grid}} > \sqrt{2}\sigma(1.41 + 6.4) \approx 11.0\sigma$. With typical tracking uncertainty $\sigma \approx 0.7~\mu\text{m}$ (consistent with the deterministic bound $\delta \approx 2\sigma$ from Section~\ref{sec:cellmigration}), we would need $\Gamma_{\mathrm{grid}} > 7.7~\mu\text{m}$---far larger than the sub-micron clearances expected in dense, disordered point clouds.

This analysis reveals that probabilistic exact-stability guarantees face the same fundamental scope limitation as deterministic exact stability: both require the grid-threshold clearance to be verifiable and sufficiently large. For large generic point clouds, neither condition is typically satisfied.

The appropriate stability tool for such settings remains the bounded-change theorem (Theorem~\ref{thm:global_bound}), which makes no assumptions about the distance spectrum and bounds total Betti-number changes in terms of local density $\Lambda$. While this bound does not provide probabilistic guarantees of exact invariance, it explains why Crocker diagrams remain informative even when exact-stability conditions cannot be verified: the total change scales with local density rather than global point count, localizing the effects of perturbations.

In practice, for large biological datasets, we recommend: (1) using the bounded-change theorem to understand worst-case behavior; (2) empirically validating stability by computing Crocker diagrams for multiple noise realizations; and (3) focusing analysis on smaller scales where $\Lambda$ is smaller and topological features are more robust.

Our stability framework thus far has assumed that the point set maintains a fixed cardinality. We now turn to the more general case where the number of points itself changes over time.

\section{Stability to Point Churn}
\label{sec:point-churn}

Real-world dynamical systems may not maintain a fixed set of entities throughout their evolution. Objects may enter or exit a scene, particles may be created or annihilated, and adaptive computational methods may refine or coarsen their discretization. These phenomena---which we refer to as \emph{point churn}---present a fundamental challenge for topological data analysis: how do we maintain meaningful summaries when the underlying point cloud's cardinality changes over time?

This section analyzes the stability of Crocker diagrams under such dynamic changes to the point set. Two key quantities control the topological disruption:
\begin{itemize}
\item $q$, the number of points inserted or deleted in a single modification event, usually much less than the total number of points $m$, and
\item $\Lambda(\varepsilon)$, the local-density parameter from Definition~\ref{def:lambda}, i.e.\ the largest neighborhood size at scale $\varepsilon$
\end{itemize}

Our main finding is surprisingly reassuring: topological changes scale linearly with the amount of churn, independent of global system size. Specifically, inserting or deleting $q$ points can alter the $k$-th Betti number by at most $A_k(\Lambda) q$, where $A_k(\Lambda) = \binom{\Lambda}{k+1}$ depends only on local density. This linear scaling---far tighter than the worst-case $O(qm^{k+1})$ bound for arbitrary complexes---reflects the geometric constraints inherent in Vietoris--Rips filtrations.

We first prove the bound, then revisit the breathing pentagon example to show how Crocker diagrams maintain their descriptive power even when entities enter or leave the system.

\subsection{Theoretical Stability for Changing Point Sets}
\label{sec:point-churn-theory}

Having established stability results for fixed point sets under general and noisy perturbations, we now consider the case where points may be added or removed during the evolution of a dynamical system. To provide theoretical guarantees in this setting, we must characterize how topological features respond to changes in the underlying index set itself.

Let us formalize this scenario by partitioning the time domain into intervals where the index set remains constant:
\begin{equation}
0 = t_0 < t_1 < \dots < t_L,
\label{eq:time_intervals}
\end{equation}
with $I_\ell = [t_{\ell-1}, t_\ell)$ representing blocks where the active index set is constant. Within each interval $I_\ell$, our previous stability results apply directly. Define the grid-threshold clearance on interval $I_\ell$ as
\begin{equation}
\Gamma_\ell := \min_{t_i \in I_\ell} \min_{j=1,\ldots,n_\varepsilon} \min_{a<b} \bigl| \|p_a(t_i) - p_b(t_i)\| - \widehat{\varepsilon}_j \bigr|.
\end{equation}
If $2\delta < \Gamma_\ell$, then by Theorem~\ref{thm:exact_stability} the Crocker diagram agrees cell-by-cell within that interval. However, as discussed in Sections~\ref{sec:cellmigration} and~\ref{sec:cellnoise}, verifying such clearance conditions may be difficult for large or generic point clouds. In these settings, the bounded-change results below provide the appropriate stability guarantees.

As in Sections~\ref{sec:stability_theory}--\ref{sec:probabilistic}, stability should not be expected in near-threshold configurations. When many pairwise distances lie close to grid thresholds, inserting or deleting even a small number of points can trigger widespread edge creation or destruction, leading to substantial changes in Betti numbers. The bounded-change theorems quantify the maximum possible impact of such events, but do not preclude large observed changes in regimes of small clearance.

The challenge arises at time boundaries $t_\ell$ when the point index set changes, violating the assumptions of our previous stability theorems and potentially causing abrupt topological changes. To understand the magnitude of these changes, we analyze how Betti numbers respond to individual modification events.

Suppose a single modification event inserts or removes a subset of $q$ points from a simplicial complex built on $m$ existing points, where $q \ll m$. While dynamical systems may experience multiple such events over time, we analyze the stability impact of each individual modification; our time-partitioned framework then tracks the cumulative effects by applying these single-event bounds at each change point $t_\ell$. This reflects the typical case where each modification affects only a small fraction of the data. We use the standard convention that a $k$-simplex has $k+1$ vertices. We first bound the number of combinatorial changes; we then translate this to topological bounds. We begin with a worst-case analysis without imposing geometric constraints and bound how many simplices can appear or disappear:

\begin{lemma}[Simplex Count Bound]
\label{lem:simplex_count}
When a single modification event adds or removes $q$ points from a point cloud of $m$ points, the number of new or lost $k$-simplices is bounded by:
\begin{equation}
\text{Number of new or lost $k$-simplices} = O(qm^k) \text{ for } k \geq 0
\label{eq:k_simplices_bound}
\end{equation}
and likewise for $(k+1)$-simplices:
\begin{equation}
\text{Number of new or lost $(k+1)$-simplices} = O(qm^{k+1}) \text{ for } k \geq 0.
\label{eq:k_plus_1_simplices_bound}
\end{equation}
\end{lemma}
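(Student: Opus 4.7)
The plan is to argue by direct combinatorial counting, with essentially no geometric content. The key structural observation I would begin with is that a simplex appears in the symmetric difference between the pre-event and post-event complexes only if at least one of its vertices lies in the set of $q$ inserted or deleted points: any simplex whose vertex set is drawn entirely from the unchanged portion of the point cloud has identical pairwise distances, and therefore identical presence in the Vietoris--Rips complex at every scale, before and after the event. Such simplices contribute nothing to the count of new-or-lost simplices.

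Given that observation, I would upper-bound the number of new-or-lost $k$-simplices by counting $(k+1)$-subsets of the active vertex pool (of size at most $m+q$) that contain at least one modified vertex. The cleanest way to do this is to pick a distinguished modified vertex in $q$ ways and then select the remaining $k$ vertices freely from the active pool in at most $\binom{m+q-1}{k}$ ways, giving the upper bound $q\binom{m+q-1}{k}$. This procedure overcounts simplices incident to several modified vertices, but overcounting is harmless for an asymptotic bound. Since $q \ll m$, we have $\binom{m+q-1}{k} = O(m^k)$, producing the claimed $O(qm^k)$ bound \eqref{eq:k_simplices_bound}. The $(k+1)$-simplex case is mechanically identical after replacing $k$ by $k+1$, yielding $q\binom{m+q-1}{k+1} = O(qm^{k+1})$ and hence \eqref{eq:k_plus_1_simplices_bound}. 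An equivalent derivation, for readers who prefer exact counts, writes the difference $\binom{m+q}{k+1} - \binom{m}{k+1}$ and telescopes it to $\sum_{j=0}^{q-1}\binom{m+j}{k}$, which is again $O(qm^k)$ for fixed $k$.

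There is no substantive obstacle to this argument; the only point worth flagging is that the lemma deliberately discards geometric information. It bounds the absolute number of simplices incident to a modified vertex without exploiting the Vietoris--Rips constraint that all pairwise distances within a simplex must lie below the current scale. That constraint, which will eventually replace the crude factor $\binom{m+q-1}{k}$ by the sharper $\binom{\Lambda(\varepsilon)-1}{k}$ from Definition~\ref{def:lambda}, is precisely what will upgrade the present $O(qm^k)$ estimate into the linear-in-$q$ Betti-change bound $A_k(\Lambda)q$ promised at the start of Section~\ref{sec:point-churn}. Keeping Lemma~\ref{lem:simplex_count} loose and purely combinatorial makes the role of the local-density refinement clearer in the next step.
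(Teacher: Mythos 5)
Your proof is correct and takes essentially the same approach as the paper: both arguments rest on the observation that only simplices containing at least one modified vertex can change status, followed by a combinatorial count of such simplices. Your distinguished-modified-vertex count $q\binom{m+q-1}{k}$ is just a cleaner packaging of the paper's sum $\sum_{i=1}^{k}\binom{q}{i}\binom{m-\sigma q}{k+1-i}$ over the exact number of modified vertices, and both yield $O(qm^k)$ under $q \ll m$.
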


\begin{proof}
Let $\sigma \in \{0,1\}$ denote addition ($\sigma=0$) or deletion ($\sigma=1$) of points. This notation allows us to handle both cases uniformly: for additions we work with the original $m$ points, while for deletions we work with the remaining $m-q$ points. We will use the standard asymptotic relation $\binom{N}{s} = O(N^s)$ for fixed $s$ throughout. We treat the case $k=0$ separately from $k \geq 1$.

For $k=0$, a $0$-simplex is simply a vertex, so exactly $q$ vertices are created or destroyed:
\begin{equation}
\text{Number of new or lost $0$-simplices} = q = O(qm^0).
\end{equation}
For the companion bound with $k+1=1$ (edges), each modified vertex can participate in at most $m$ edges with existing vertices, giving
\begin{equation}
\text{Number of new or lost $1$-simplices} = O(qm) = O(qm^{0+1}).
\end{equation}

For $k \geq 1$, a new or lost $k$-simplex arises either entirely among the $q$ modified vertices or as a mixed simplex (i.e., a simplex that contains both modified and unmodified vertices). The first type contributes at most $\binom{q}{k+1} = O(q^{k+1})$ simplices by our asymptotic relation. For mixed simplices, we must count those with exactly $i$ modified vertices (where $1 \leq i \leq k$) and $k+1-i$ unmodified vertices:
\begin{equation}
\sum_{i=1}^{k} \binom{q}{i}\binom{m-\sigma q}{k+1-i}.
\label{eq:mixed_simplices}
\end{equation}
Using our asymptotic relation and the assumption that $q \ll m$, each term behaves as
\begin{equation}
\binom{q}{i}\binom{m-\sigma q}{k+1-i} = O(q^i (m-\sigma q)^{k+1-i}) = O(q^i m^{k+1-i}) = O(qm^k(q/m)^{i-1}),
\end{equation}
where the second equality uses $(m-\sigma q)^{k+1-i} \leq m^{k+1-i}$ since $\sigma q \leq q \ll m$. Since $q/m < 1$, successive terms in the sum decrease geometrically (the ratio of consecutive terms is $q/m$), and the sum is dominated by the $i=1$ term, yielding $O(qm^k)$ mixed simplices.

Combining both contributions, we have $O(q^{k+1}) + O(qm^k) = O(qm^k)$ new or lost $k$-simplices, where the mixed-simplex term dominates since $q^{k+1} \leq qm^k$ for $k \geq 1$ when $q \leq m$. The same reasoning with $k$ replaced by $k+1$ yields $O(qm^{k+1})$ for $(k+1)$-simplices.
\end{proof}

Having established bounds on the number of simplices affected by point insertions and deletions, we now translate these combinatorial results into topological consequences. Adding or removing a single simplex corresponds to adding or removing one column in the relevant boundary matrix ($\partial_k$ for a $k$-simplex, $\partial_{k+1}$ for a $(k+1)$-simplex). A rank-one column update changes the matrix rank by at most 1, so each simplex modification can alter $\beta_k$ by at most one. By combining this per-simplex bound with our simplex counts from Lemma~\ref{lem:simplex_count}, we can derive worst-case bounds on how dramatically the topology can change when points are added or removed from the system:

\begin{theorem}[Worst-Case Betti Number Change]
\label{thm:worst_case_betti}
When a single modification event adds or removes $q$ points from a point cloud of $m$ points, the maximum change in the $k$th Betti number is bounded by:
\begin{equation}
|\Delta \beta_k| = O(qm^{k+1}) \text{ for } k \geq 0.
\label{eq:worst}
\end{equation}
\end{theorem}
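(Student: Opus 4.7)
The plan is to reduce the change in $\beta_k$ to a count of simplex changes, then apply Lemma~\ref{lem:simplex_count}. I would express the Betti number via the boundary complex as $\beta_k = \dim\ker\partial_k - \dim\operatorname{im}\partial_{k+1}$. Adding or removing a single $k$-simplex inserts or deletes one column of the boundary matrix $\partial_k$ (and one row of $\partial_{k+1}$), while adding or removing a single $(k+1)$-simplex inserts or deletes one column of $\partial_{k+1}$. Each such modification is a rank-one column operation, so it alters $\operatorname{rank}\partial_k$ or $\operatorname{rank}\partial_{k+1}$---and hence $\dim\ker\partial_k$ or $\dim\operatorname{im}\partial_{k+1}$---by at most one. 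Iterating simplex-by-simplex yields
\[
|\Delta\beta_k| \;\leq\; N_k^{\mathrm{mod}} + N_{k+1}^{\mathrm{mod}},
\]
where $N_k^{\mathrm{mod}}$ and $N_{k+1}^{\mathrm{mod}}$ denote the numbers of $k$- and $(k+1)$-simplices created or destroyed by the modification event.

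Next I would substitute the bounds from Lemma~\ref{lem:simplex_count}, namely $N_k^{\mathrm{mod}} = O(qm^k)$ and $N_{k+1}^{\mathrm{mod}} = O(qm^{k+1})$. Since $qm^k = O(qm^{k+1})$ whenever $q \leq m$, the $(k+1)$-simplex term dominates and produces $|\Delta\beta_k| = O(qm^{k+1})$ for $k \geq 1$. For the edge case $k = 0$, one adopts the convention $\partial_0 = 0$, so $\dim\ker\partial_0$ equals the number of vertices; the $q$ modified vertices contribute $q$ to $|\Delta\beta_0|$, while the $O(qm)$ edge changes control $\dim\operatorname{im}\partial_1$, giving $|\Delta\beta_0| = O(qm)$, consistent with the unified bound.

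The principal obstacle is the per-simplex rank-change bookkeeping. I need to verify that inserting or deleting a single simplex---especially a $k$-simplex, which simultaneously modifies a column of $\partial_k$ and a row of $\partial_{k+1}$---shifts each component $\dim\ker\partial_k$ and $\dim\operatorname{im}\partial_{k+1}$ by at most one. The row-insertion part is harmless: extending the codomain of a linear map by one dimension can increase its rank by at most one (and never decrease it), while $\dim\ker\partial_k$ is unaffected by changes in $\partial_{k+1}$'s codomain labeling. Once this accounting is made precise, the theorem follows immediately by combining the per-simplex bound with the asymptotic estimates from Lemma~\ref{lem:simplex_count}.
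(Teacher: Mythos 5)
Your proposal is correct and follows essentially the same route as the paper: bound the change in $\beta_k$ by one per created or destroyed $k$- or $(k+1)$-simplex via a rank-one column-update argument, then substitute the $O(qm^k)$ and $O(qm^{k+1})$ counts from Lemma~\ref{lem:simplex_count} and let the $(k+1)$-simplex term dominate. Your extra care about the row of $\partial_{k+1}$ affected by a $k$-simplex insertion, and the explicit $k=0$ case, are refinements of bookkeeping the paper leaves implicit, not a different argument.
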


\begin{proof}
The total change in $\beta_k$ is bounded by the sum of the maximum changes from $k$-simplices and $(k+1)$-simplices. By Lemma \ref{lem:simplex_count}, we have:
\begin{equation}
|\Delta \beta_k| \leq O(q m^k) + O(q m^{k+1}) = O(q m^{k+1})
\end{equation}
Since $q m^k \leq q m^{k+1}$, this simplifies to $|\Delta \beta_k| = O(q m^{k+1})$.
\end{proof}

The implicit constants in this bound depend only on the dimension $k$; they are independent of $m$ and $q$. Expressing $q = p m$ where $0 < p < 1$ (the modified points as a proportion of the total), our bound becomes:
\begin{equation}
|\Delta \beta_k| = O(pm \cdot m^{k+1}) = O(p m^{k+2}) \text{ for } k \geq 0.
\label{eq:beta_k_proportion}
\end{equation}
Note that for $k=0$ (counting connected components), this gives the intentionally loose bound $O(pm^2)$ rather than the exact $O(pm)$ count of vertices. We accept this coarser bound for uniformity across all dimensions, though tighter control on $\beta_0$ can be retained if needed for specific applications.

This worst-case analysis assumes arbitrary simplicial complexes with no geometric constraints. Fortunately, when we consider the spatial proximity inherent in Vietoris--Rips complexes, this worst-case explosion is dramatically reduced to a linear effect controlled by local density rather than global size:

\begin{proposition}[Geometry-Aware Stability]
\label{prop:geom_aware}
For a Vietoris--Rips complex at scale $\widehat{\varepsilon}_j$, a single modification event affecting $q$ vertices can change the $k$th Betti number by at most:
\begin{equation}
|\Delta \beta_k| \leq A_k(\Lambda) \, q
\label{eq:delta_beta_k_local}
\end{equation}
where $A_k(\Lambda) = \binom{\Lambda}{k+1}$ and $\Lambda = \Lambda(\widehat{\varepsilon}_j)$ is the maximum neighborhood size at scale $\widehat{\varepsilon}_j$ over all vertices in both the pre- and post-modification point clouds.
\end{proposition}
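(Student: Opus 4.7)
The plan is to reduce Proposition~\ref{prop:geom_aware} to a sum of per-vertex contributions, each controlled by the simplex participation machinery already established in Lemma~\ref{lem:simplex_bound} and Proposition~\ref{prop:betti_change}. The key observation is that whether a vertex is inserted or deleted, only simplices containing that vertex can change status; every other simplex in the Vietoris--Rips complex is untouched. Because $\Lambda = \Lambda(\widehat{\varepsilon}_j)$ is defined as the maximum neighborhood size at scale $\widehat{\varepsilon}_j$ over both the pre- and post-modification point clouds, each affected vertex has at most $\Lambda - 1$ neighbors in whichever complex it lives in, so the simplex participation bound applies uniformly to insertions and deletions, with no need for the triangle-inequality inflation used for perturbations.

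First I would fix a single modified vertex $v$ and count the simplices containing $v$ that appear or disappear during the event. By the argument of Lemma~\ref{lem:simplex_bound} applied with the uniform neighborhood cap $\Lambda - 1$, this count is at most $\binom{\Lambda - 1}{k}$ for $k$-simplices and $\binom{\Lambda - 1}{k+1}$ for $(k+1)$-simplices. As in the proof of Proposition~\ref{prop:betti_change}, each such simplex change corresponds to a rank-one column update in either $\partial_k$ or $\partial_{k+1}$, which shifts $\dim \ker \partial_k$ or $\dim \operatorname{im} \partial_{k+1}$ by at most one and therefore $\beta_k$ by at most one. Summing the two contributions and applying Pascal's identity collapses the per-vertex bound to $\binom{\Lambda - 1}{k} + \binom{\Lambda - 1}{k+1} = \binom{\Lambda}{k+1} = A_k(\Lambda)$. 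Summing over the $q$ modified vertices then yields the claimed inequality $|\Delta \beta_k| \le A_k(\Lambda)\, q$.

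The main subtlety I expect to address is justifying that the per-vertex bounds can simply be added without double-counting invalidating the argument. A simplex spanned by two or more modified vertices is indeed counted multiple times in the per-vertex sum, but this only makes the aggregate bound looser, not incorrect: we are bounding the total number of boundary-matrix column updates, and each such update contributes at most one to $|\Delta \beta_k|$. A secondary point worth emphasizing in the write-up is that the uniform definition of $\Lambda$ across both configurations is essential, since neighborhoods in the pre- and post-modification complexes may differ; by taking the maximum we ensure the per-vertex cap is valid whether the relevant simplex is being destroyed (count in the pre-modification complex) or created (count in the post-modification complex), which is precisely what keeps the argument short and geometry-driven rather than combinatorial in $m$.
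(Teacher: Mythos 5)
Your proposal is correct and follows essentially the same route as the paper: a per-vertex simplex-participation cap of $\binom{\Lambda-1}{k}+\binom{\Lambda-1}{k+1}$, the observation that each simplex change alters $\beta_k$ by at most one, Pascal's identity, and multiplication by $q$. Your added remarks on double-counting and on why $\Lambda$ must be taken over both the pre- and post-modification clouds are sensible clarifications of points the paper leaves implicit, but they do not change the argument.
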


\begin{proof}
In a Vietoris--Rips complex at scale $\widehat{\varepsilon}_j$, each vertex has at most $\Lambda(\widehat{\varepsilon}_j) - 1$ neighbors. Thus it participates in at most $\binom{\Lambda(\widehat{\varepsilon}_j)-1}{k}$ $k$-simplices and $\binom{\Lambda(\widehat{\varepsilon}_j)-1}{k+1}$ $(k+1)$-simplices. Modifying $q$ vertices can therefore affect at most $q \cdot \binom{\Lambda(\widehat{\varepsilon}_j)-1}{k}$ $k$-simplices and $q \cdot \binom{\Lambda(\widehat{\varepsilon}_j)-1}{k+1}$ $(k+1)$-simplices. Since each simplex modification changes $\beta_k$ by at most one, the total change is bounded by:
\begin{equation}
|\Delta \beta_k| \leq q \cdot \left[\binom{\Lambda(\widehat{\varepsilon}_j) - 1}{k} + \binom{\Lambda(\widehat{\varepsilon}_j) - 1}{k+1} \right].
\end{equation}
Applying Pascal's rule $\binom{n-1}{k} + \binom{n-1}{k+1} = \binom{n}{k+1}$ to the bracketed term gives $\binom{\Lambda(\widehat{\varepsilon}_j)}{k+1} = A_k(\Lambda)$, completing the proof.
\end{proof}

To quantify the effect across an entire Crocker diagram, we extend our analysis to all affected time columns. In the theorem below, the factor $(n_t - i_\ell + 1)$ represents the number of time frames occurring after a point-set modification event, where $n_t$ is the total number of time frames.

\begin{theorem}[Global Stability with Changing Point Sets]
\label{thm:global_changing}
Let $B_k^{\text{orig}}$ and $B_k^{\text{mod}}$ denote the $k$-th Betti number matrices for the original and modified point clouds. If the point set changes at time $t_\ell$ (corresponding to time index $i_\ell$), the total difference across the entire Crocker diagram under the $\ell_1$-norm (sum of absolute differences) is bounded by:
\begin{equation}
\| B_k^{\text{mod}} - B_k^{\text{orig}} \|_1
\leq
(n_t - i_\ell + 1) \cdot q \cdot
\sum_{j=1}^{n_\varepsilon}
A_k(\Lambda(\widehat{\varepsilon}_j)).
\label{eq:final_bound}
\end{equation}
\end{theorem}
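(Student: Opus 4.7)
The plan is to reduce the global bound to a sum of cell-level applications of Proposition~\ref{prop:geom_aware}. First I would observe that, by hypothesis, the modification event occurs at $t_\ell$, so for every time index $i < i_\ell$ the original and modified point clouds coincide. Consequently their Vietoris--Rips complexes at every scale $\widehat{\varepsilon}_j$ are identical, and the corresponding Betti numbers agree:
\begin{equation}
\beta_k^{\text{mod}}(\widehat{\varepsilon}_j, t_i) = \beta_k^{\text{orig}}(\widehat{\varepsilon}_j, t_i) \quad \text{for all } i < i_\ell, \; j = 1,\dots,n_\varepsilon.
\end{equation}
These frames contribute zero to the entrywise $\ell_1$ norm on the left-hand side of~\eqref{eq:final_bound}, so the problem reduces to bounding the contribution from the $n_t - i_\ell + 1$ frames indexed by $i \ge i_\ell$.

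Next I would apply Proposition~\ref{prop:geom_aware} at each remaining cell $(\widehat{\varepsilon}_j, t_i)$ with $i \ge i_\ell$. At each such frame the two clouds differ by a single modification event affecting at most $q$ vertices, so the proposition supplies the per-cell inequality
\begin{equation}
\bigl|\beta_k^{\text{mod}}(\widehat{\varepsilon}_j, t_i) - \beta_k^{\text{orig}}(\widehat{\varepsilon}_j, t_i)\bigr| \le A_k(\Lambda(\widehat{\varepsilon}_j)) \, q.
\end{equation}
Summing this inequality over the $n_t - i_\ell + 1$ post-modification frames and over all $n_\varepsilon$ scale values then yields~\eqref{eq:final_bound} directly; no cancellations or additional combinatorics are required.

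The main obstacle is conceptual rather than technical: one must ensure that $\Lambda(\widehat{\varepsilon}_j)$ provides a uniform neighborhood-size bound across every affected frame and across both the pre- and post-modification configurations. Because Definition~\ref{def:lambda} already maximizes over all sampled times and Proposition~\ref{prop:geom_aware} is stated with $\Lambda$ taken over both clouds, the same constant $A_k(\Lambda(\widehat{\varepsilon}_j))$ may be applied at every affected cell without any per-frame adjustment. The resulting bound is worst-case in two familiar respects: (i) a Betti-number change driven by simultaneous simplex changes at several vertices is over-counted because each modified vertex is charged separately, and (ii) many potential simplex changes will not alter any $\beta_k$. These slacks explain why the bound scales linearly in $q$ and in the number of post-modification frames, yet remains otherwise independent of the global system size $m$.
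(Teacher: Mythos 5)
Your proposal is correct and follows essentially the same route as the paper: apply the per-cell bound of Proposition~\ref{prop:geom_aware} at each of the $n_t - i_\ell + 1$ post-modification frames, note that pre-modification frames contribute nothing, and sum over the $n_\varepsilon$ scales. Your explicit remark that $\Lambda(\widehat{\varepsilon}_j)$ must serve as a uniform neighborhood bound across all affected frames is a point the paper's proof leaves implicit, but it does not change the argument.
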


\begin{proof}
For each scale $\widehat{\varepsilon}_j$, the total change in the $j$-th scale row of the Crocker matrix is:
\begin{equation}
\sum_{i:\,t_i \geq t_\ell}
|\beta_k^{\text{mod}}(\widehat{\varepsilon}_j, t_i) - \beta_k^{\text{orig}}(\widehat{\varepsilon}_j, t_i)|
\leq
(n_t - i_\ell + 1) \cdot q \cdot A_k(\Lambda(\widehat{\varepsilon}_j)),
\label{eq:row_sum}
\end{equation}
Summing over all scale rows gives the full $\ell_1$-norm difference:
\begin{equation}
\| B_k^{\text{mod}} - B_k^{\text{orig}} \|_1
=
\sum_{j=1}^{n_\varepsilon}
\sum_{i:\,t_i \geq t_\ell}
|\beta_k^{\text{mod}}(\widehat{\varepsilon}_j, t_i) - \beta_k^{\text{orig}}(\widehat{\varepsilon}_j, t_i)|.
\label{eq:l1_norm}
\end{equation}
Since the inequality in \eqref{eq:row_sum} holds for each $t_i \geq t_\ell$, summing preserves the bound. Substituting \eqref{eq:row_sum} into \eqref{eq:l1_norm} yields our final bound.
\end{proof}

In summary, the effect of point-set churn is determined by local density rather than global size. This geometry-aware stability bound has several important properties: it scales linearly in the number of inserted or deleted points $q$, with a constant factor depending only on local density $\Lambda(\widehat{\varepsilon}_j)$ and the homology dimension $k$. The bound applies across all filtration scales and is much tighter than the earlier worst-case bound \eqref{eq:worst}, which depends on the global point count $m$ rather than local density. 

For dynamical systems experiencing multiple modification events over time, the cumulative topological change is bounded by the sum of individual event bounds. Because events occur in disjoint time intervals and the $\ell_1$ norm obeys the triangle inequality, the global bound for a sequence of events is simply the sum of the per-event bounds.

\subsection{Analytical Application: Breathing Pentagon with Insertion}

To illustrate our bound on point insertions and deletions, we revisit the breathing pentagon from Section~\ref{sec:breathing-poly}. The example we will now study is a small one, deliberately engineered to demonstrate how local geometric constraints govern topological disruption, thereby providing a controlled test of our theoretical framework. Consider the moment $t^\star$ when the pentagon's radius equals $a(t^\star)=1$, making the side-length (first critical scale) $c_1(t^\star) = 2a(t^\star)\sin(\pi/5) \approx 1.176$. Recalling the setup from Section~\ref{sec:breathing-poly}, the pentagon vertices are located at:
\begin{equation}
\begin{alignedat}{3}
v_0 &= (1,0) & & \\
v_1 &= (\cos 72^\circ,\sin 72^\circ) &\quad &\approx (\phantom{-}0.309, \phantom{-}0.951)\\
v_2 &= (\cos 144^\circ,\sin 144^\circ) & &\approx (-0.809, \phantom{-}0.588)\\
v_3 &= (\cos 216^\circ,\sin 216^\circ) & &\approx (-0.809, -0.588)\\
v_4 &= (\cos 288^\circ,\sin 288^\circ) & &\approx (\phantom{-}0.309, -0.951)
\end{alignedat}
\end{equation}

At time $t^\star$, we insert a new point $v_\ast$ at the geometric midpoint between the non-adjacent vertices $v_0$ and $v_2$, giving $v_\ast = (0.096, 0.294)$. We analyze the resulting Vietoris--Rips filtration at this single time frame. If the inserted point persists for $r$ consecutive frames, the global bound in Theorem~\ref{thm:global_changing} scales by the factor $r$.

By this positioning, $v_\ast$ is equidistant from the non-adjacent vertices $v_0$ and $v_2$ at distance approximately $0.951$, while being closest to $v_1$ at distance $0.691$. The distances from $v_\ast$ to $v_3$ and $v_4$ are both $1.263$. Crucially, the distance between $v_0$ and $v_2$ themselves is much larger: $c_2(t^\star) = 2\sin(2\pi/5) \approx 1.902$.

With these distances established, we can trace the topological evolution as $\varepsilon$ increases, shown in Figure~\ref{fig:breathing-pentagon-cycle}. The key observation is that the insertion \emph{destroys} the pentagon's 1-cycle earlier than it would otherwise die, rather than creating a new cycle.

\begin{figure}[ht!]
\centering

\def\FIGscale{1.2}
\def\RadialShift{0.17}
\def\DiskOpacity{0.5}      
\def\DiskColor{black!15}  
\def\PointSize{1pt}        
\def\LabelSize{\scriptsize}

\newcommand{\PentagonCoords}{%
  \coordinate (v0) at (1, 0);
  \coordinate (v1) at ({cos(72)}, {sin(72)});
  \coordinate (v2) at ({cos(144)}, {sin(144)});
  \coordinate (v3) at ({cos(216)}, {sin(216)});
  \coordinate (v4) at ({cos(288)}, {sin(288)});
  \coordinate (vn) at (0.096, 0.294);}

\newcommand{\LabelAllPointsRadial}{%
  \foreach \v/\name in {v0/$v_0$, v1/$v_1$, v2/$v_2$, v3/$v_3$, v4/$v_4$}%
      \node[font=\scriptsize] at ($(0,0)!1+\RadialShift!(\v)$) {\name};

  \node[font=\scriptsize] at ($(vn)+(-0.12,0.12)$) {$v_{\ast}$};
}

\begin{subfigure}[t]{0.42\textwidth}
\centering
\begin{tikzpicture}[scale=\FIGscale]
  \PentagonCoords
  \def\Rad{0.25} 
  \foreach \P in {v0,v1,v2,v3,v4,vn}
    \fill[\DiskColor, opacity=\DiskOpacity] (\P) circle[radius=\Rad];
  \foreach \p in {v0,v1,v2,v3,v4,vn}
    \fill (\p) circle(\PointSize);
  \LabelAllPointsRadial
\end{tikzpicture}
\caption*{\small (A)\ $\varepsilon < 0.691$}
\end{subfigure}
%
\begin{subfigure}[t]{0.42\textwidth}
\centering
\begin{tikzpicture}[scale=\FIGscale]
  \PentagonCoords
  \def\Rad{0.345}
  \foreach \P in {v0,v1,v2,v3,v4,vn}
    \fill[\DiskColor, opacity=\DiskOpacity] (\P) circle[radius=\Rad];
  \draw[thick] (vn)--(v1);
  \foreach \p in {v0,v1,v2,v3,v4,vn}
    \fill (\p) circle(\PointSize);
  \LabelAllPointsRadial
\end{tikzpicture}
\caption*{\small (B)\ $\varepsilon = 0.691$}
\end{subfigure}

\vspace{0.5em}

\begin{subfigure}[t]{0.42\textwidth}
\centering
\begin{tikzpicture}[scale=\FIGscale]
  \PentagonCoords
  \def\Rad{0.475}
  \foreach \P in {v0,v1,v2,v3,v4,vn}
    \fill[\DiskColor, opacity=\DiskOpacity] (\P) circle[radius=\Rad];
  \foreach \x in {v0,v1,v2}
    \draw[thick] (vn)--(\x);
  \foreach \p in {v0,v1,v2,v3,v4,vn}
    \fill (\p) circle(\PointSize);
  \LabelAllPointsRadial
\end{tikzpicture}
\caption*{\small (C)\ $\varepsilon = 0.951$}
\end{subfigure}
%
\begin{subfigure}[t]{0.42\textwidth}
\centering
\begin{tikzpicture}[scale=\FIGscale]
  \PentagonCoords
  \def\Rad{0.588}
  \foreach \P in {v0,v1,v2,v3,v4,vn}
    \fill[\DiskColor, opacity=\DiskOpacity] (\P) circle[radius=\Rad];
  \fill[blue!50!white, opacity=0.4] (vn)--(v0)--(v1)--cycle;
  \fill[blue!50!white, opacity=0.4] (vn)--(v1)--(v2)--cycle;
  \foreach \a/\b in {v0/v1, v1/v2, v2/v3, v3/v4, v4/v0}
    \draw[thick] (\a)--(\b);
  \foreach \x in {v0,v1,v2}
    \draw[thick,blue!70] (vn)--(\x);
  \foreach \p in {v0,v1,v2,v3,v4,vn}
    \fill (\p) circle(\PointSize);
  \LabelAllPointsRadial
\end{tikzpicture}
\caption*{\small (D)\ $\varepsilon = c_1 \approx 1.176$}
\end{subfigure}

\vspace{0.5em}

\begin{subfigure}[t]{0.42\textwidth}
\centering
\begin{tikzpicture}[scale=\FIGscale]
  \PentagonCoords
  \def\Rad{0.631}
  \foreach \P in {v0,v1,v2,v3,v4,vn}
    \fill[\DiskColor, opacity=\DiskOpacity] (\P) circle[radius=\Rad];
  \fill[blue!50!white, opacity=0.4] (vn)--(v0)--(v1)--cycle;
  \fill[blue!50!white, opacity=0.4] (vn)--(v1)--(v2)--cycle;
  \fill[blue!50!white, opacity=0.4] (vn)--(v2)--(v3)--cycle;
  \fill[blue!50!white, opacity=0.4] (vn)--(v3)--(v4)--cycle;
  \fill[blue!50!white, opacity=0.4] (vn)--(v4)--(v0)--cycle;
  \foreach \a/\b in {v0/v1, v1/v2, v2/v3, v3/v4, v4/v0}
    \draw[thick] (\a)--(\b);
  \foreach \x in {v0,v1,v2,v3,v4}
    \draw[thick,blue!70] (vn)--(\x);
\foreach \p in {v0,v1,v2,v3,v4,vn}
    \fill (\p) circle(\PointSize);
  \LabelAllPointsRadial
\end{tikzpicture}
\caption*{\small (E)\ $\varepsilon = 1.263$}
\end{subfigure}
%
\begin{subfigure}[t]{0.42\textwidth}
\centering
\begin{tikzpicture}[scale=\FIGscale]
  \PentagonCoords
  \def\Rad{0.951}
  \foreach \P in {v0,v1,v2,v3,v4,vn}
    \fill[\DiskColor, opacity=\DiskOpacity] (\P) circle[radius=\Rad];
  \fill[blue!50!white, opacity=0.4] (vn)--(v0)--(v1)--cycle;
  \fill[blue!50!white, opacity=0.4] (vn)--(v1)--(v2)--cycle;
  \fill[blue!50!white, opacity=0.4] (vn)--(v2)--(v3)--cycle;
  \fill[blue!50!white, opacity=0.4] (vn)--(v3)--(v4)--cycle;
  \fill[blue!50!white, opacity=0.4] (vn)--(v4)--(v0)--cycle;
  \foreach \a/\b in {v0/v1, v1/v2, v2/v3, v3/v4, v4/v0, v0/v2}
    \draw[thick] (\a)--(\b);
  \foreach \x in {v0,v1,v2,v3,v4}
    \draw[thick,blue!70] (vn)--(\x);
  \foreach \p in {v0,v1,v2,v3,v4,vn}
    \fill (\p) circle(\PointSize);
  \LabelAllPointsRadial
\end{tikzpicture}
\caption*{\small (F)\ $\varepsilon = c_2 \approx 1.902$}
\end{subfigure}

\caption{Filtration of the Vietoris--Rips complex for a regular pentagon with unit circumradius at time $t^\star$ (vertices $v_0, v_1, v_2, v_3, v_4$) after inserting point $v_\ast = (0.096, 0.294)$ at the geometric midpoint between non-adjacent vertices $v_0$ and $v_2$. Each panel shows the complex at threshold $\varepsilon$ using $\varepsilon/2$-balls (gray) around vertices; edges connect vertices at distance $\le \varepsilon$. (A)--(C)~At small scales, $v_\ast$ first forms an isolated component, then connects to $v_1$, then to $v_0$ and $v_2$. (D)~At $\varepsilon = c_1 \approx 1.176$, the pentagon side edges appear; since $v_\ast v_0$, $v_\ast v_1$, and $v_\ast v_2$ are already present from smaller scales, the flag property implies the triangles $(v_0,v_1,v_\ast)$ and $(v_1,v_2,v_\ast)$ (blue) appear immediately. (E)~At $\varepsilon = 1.263$, edges $v_\ast$--$v_3$ and $v_\ast$--$v_4$ appear, making $v_\ast$ adjacent to all pentagon vertices. The resulting fan of five triangles \emph{fills} the pentagon cycle, so $\beta_1 = 0$. (F)~Without the insertion, the pentagon cycle would persist until $\varepsilon = c_2 \approx 1.902$. Thus the insertion causes $\Delta\beta_1 = -1$ on $[1.263, 1.902)$: it destroys the cycle early rather than creating a new one. The observed $|\Delta\beta_1| = 1$ lies within our theoretical bound of 15 based on the local neighbor cap $\Lambda = 6$.}
\label{fig:breathing-pentagon-cycle}
\end{figure}
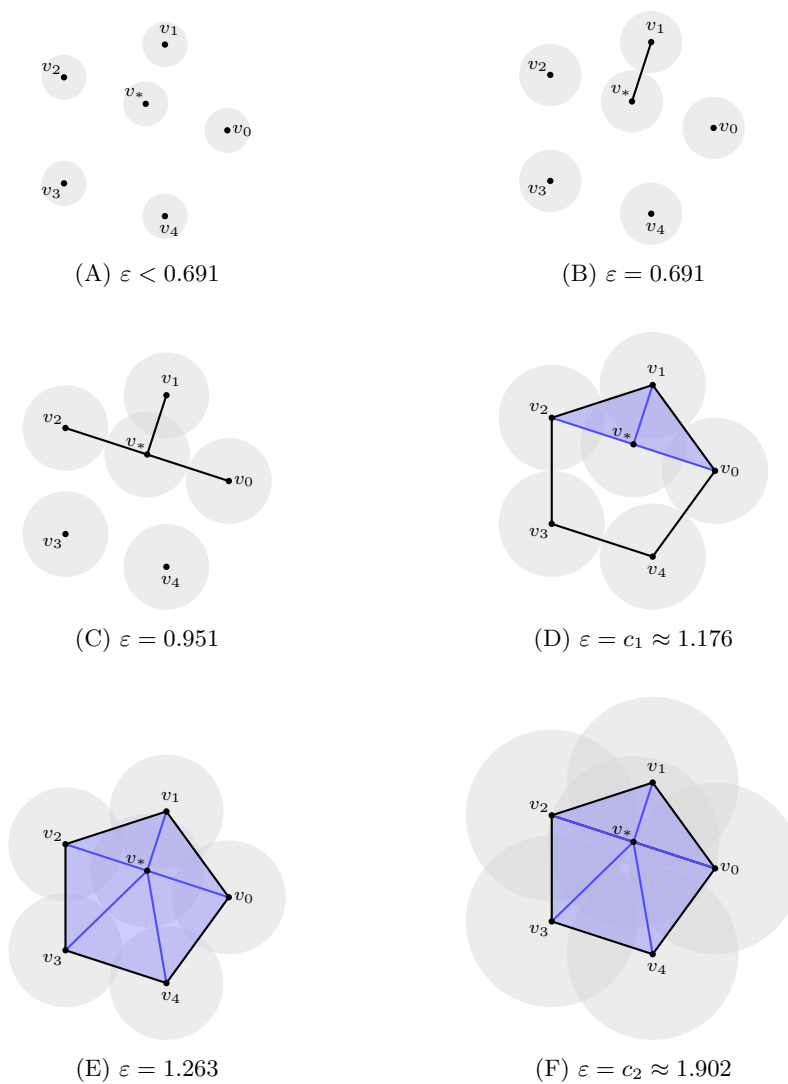

We now trace the filtration. For $\varepsilon < 0.691$, the inserted point $v_\ast$ forms its own connected component, so $\beta_0$ increases by 1 compared to the pentagon alone. When $\varepsilon$ reaches $0.691$, the edge $v_\ast$--$v_1$ appears, merging $v_\ast$ into the pentagon's component. At $\varepsilon = 0.951$, edges $v_\ast$--$v_0$ and $v_\ast$--$v_2$ appear simultaneously, connecting the new point to three pentagon vertices. Note that at this scale, the pentagon side edges (of length $c_1 \approx 1.176$) are not yet present.

When $\varepsilon$ reaches $c_1 \approx 1.176$, all pentagon side edges appear, creating the familiar pentagon cycle. At this moment, since the edges $v_\ast v_0$, $v_\ast v_1$, and $v_\ast v_2$ are already present (from smaller $\varepsilon$), the flag property implies that the triangles $(v_0, v_1, v_\ast)$ and $(v_1, v_2, v_\ast)$ appear immediately---all their edges now exist. Without the inserted point, we would have $\beta_1 = 1$. With $v_\ast$ present, the two triangles partially fill the cycle, but $\beta_1 = 1$ in both cases at this scale, so $\Delta\beta_1 = 0$.

The critical transition occurs at $\varepsilon = 1.263$, when edges $v_\ast$--$v_3$ and $v_\ast$--$v_4$ appear. Now $v_\ast$ is connected to \emph{all five} pentagon vertices. In a flag complex, this creates a ``fan'' of five triangles: $(v_0, v_1, v_\ast)$, $(v_1, v_2, v_\ast)$, $(v_2, v_3, v_\ast)$, $(v_3, v_4, v_\ast)$, and $(v_4, v_0, v_\ast)$. These five triangles together bound the pentagon cycle $v_0 \to v_1 \to v_2 \to v_3 \to v_4 \to v_0$, killing $\beta_1$. With $v_\ast$ present, $\beta_1 = 0$ for $\varepsilon \ge 1.263$ because the fan fills the cycle. Without $v_\ast$, $\beta_1 = 1$ would persist until $\varepsilon = c_2 \approx 1.902$, when diagonal $v_0$--$v_2$ appears and creates filling triangles. Thus $\Delta\beta_1 = -1$ on the interval $[1.263, 1.902)$: the insertion \emph{decreases} $\beta_1$ by causing the pentagon's cycle to be filled earlier.

We now validate the churn bound. The local neighbor count $\Lambda(\varepsilon)$ around $v_\ast$ evolves as:
\begin{equation}
\Lambda(\varepsilon)=
\begin{cases}
1 \quad \text{(only $v_\ast$ itself)},& \varepsilon < 0.691,\\
2 \quad \text{($v_\ast$ and $v_1$)},& 0.691 \leq \varepsilon < 0.951,\\
4 \quad \text{($v_\ast$, $v_0$, $v_1$, and $v_2$)},& 0.951 \leq \varepsilon < 1.263,\\
6 \quad \text{(all vertices)},& \varepsilon \geq 1.263.
\end{cases}
\end{equation}
Note that $v_3$ and $v_4$ enter the neighborhood simultaneously at $\varepsilon = 1.263$ due to symmetry, so $\Lambda$ jumps directly from 4 to 6.

With $q=1$ modified vertex, our local bound from Proposition~\ref{prop:geom_aware} uses the maximum value $\Lambda = 6$:
\begin{align}
|\Delta\beta_0| &\leq A_0(6) = \binom{6}{1} = 6,\\
|\Delta\beta_1| &\leq A_1(6) = \binom{6}{2} = 15.
\end{align}
For $k \geq 2$, we have $|\Delta\beta_k| = 0$ because six points in $\mathbb{R}^2$ cannot support homology above dimension one. The observed changes---$|\Delta\beta_0| = 1$ at small scales and $|\Delta\beta_1| = 1$ on $[1.263, 1.902)$---lie comfortably within these bounds.

By Theorem~\ref{thm:global_changing}, the bound scales linearly with both the number of modified points $q$ and the number of affected time frames, while remaining independent of the global point count. This example illustrates a general principle: inserting a point that connects to many existing vertices tends to ``cone off'' existing cycles, reducing $\beta_1$ rather than increasing it. Once $v_\ast$ connects to all pentagon vertices, it provides a filling disk for the cycle.

The behavior we observe validates our theory in two ways. First, even when we engineer a configuration for maximal local connectivity, the changes remain well within the predicted limits. Second, the construction demonstrates that topological disruption from insertions is governed by local geometry (the neighbor count $\Lambda$) rather than global point count---if the same pentagon were embedded in a much larger cloud, the local $\Lambda$ around $v_\ast$ would be unchanged, so the same finite bounds would apply.

\section{Conclusion}

Crocker diagrams have become a widely used tool for visualizing time-evolving topology. Until now, however, their empirically observed effectiveness lacked a firm theoretical basis. This paper addresses that gap by providing a unified framework of deterministic, probabilistic, and insertion--deletion bounds on Betti counts for time-varying point clouds constructed via a Vietoris--Rips filtration in Euclidean space.

Our analysis reveals a two-tier stability structure. For point clouds with structured pairwise distances---such as the breathing polygon model we develop---small deterministic perturbations leave a Crocker diagram \emph{exactly} unchanged whenever every point moves by less than $\Gamma/2$, where the grid-threshold clearance $\Gamma$ measures the minimum separation between grid thresholds and critical distances. For large or generic point clouds where pairwise distances are dense, this clearance may be extremely small or impractical to verify; in such settings, our bounded-change theorems provide the appropriate guarantees: Betti-number changes are controlled by local sampling density $\Lambda(\varepsilon)$ rather than global point-cloud size, explaining why Crocker diagrams remain informative even when exact-stability conditions cannot be certified.

The probabilistic theory extends this picture. Under isotropic Gaussian noise, the probability of any topological change decays exponentially once clearance exceeds the typical noise magnitude, $\Gamma_{\mathrm{grid}} > \sqrt{2}\sigma\sqrt{d}$. However, the union-bound prefactor $\binom{m}{2} n_t$ grows rapidly with the number of points and time frames, so meaningful probabilistic guarantees require either small point clouds (where the prefactor is manageable) or clearances substantially larger than the noise scale. When clearance is small relative to noise---as occurs in near-threshold configurations where many pairwise distances cluster around grid values---probabilistic exact-stability guarantees become vacuous, and one must rely on the deterministic bounded-change theorems to quantify the maximum possible impact of perturbations.

Finally, point insertions or deletions affect Betti counts at most linearly in the number of modified points, as only simplices incident to those points can change. This bound depends on local density $\Lambda$ rather than global size, supporting a locality principle: topological risk is governed by nearby geometry.

To illustrate these bounds in practice, we developed an analytically solvable breathing polygon model that revealed a non-monotonic relationship between side count and stability, with optimal stability at $m=5$ sides. The breathing pentagon also demonstrated the probabilistic framework: with sufficiently small noise, exact-stability guarantees are achievable even for dynamic point clouds, while larger noise renders the union bound vacuous. For epithelial cell imaging, we showed that exact stability---whether deterministic or probabilistic---is difficult to certify for large, disordered point clouds because the grid-threshold clearance is likely to be extremely small. The bounded-change theorem provides the appropriate theoretical tool for such applications, explaining empirical robustness without requiring clearance verification. The breathing pentagon with point insertion validated our insertion--deletion bounds by tracking a genuine homological event through a controlled scenario.

These results rest on three key assumptions: (i) we employ a Vietoris--Rips flag complex on points in~$\mathbb{R}^d$; (ii) for exact deterministic stability, each sampled scale must lie strictly between successive critical distances at all sampled times (the in-gap condition)---though our bounded-change and insertion--deletion results do not require this; and (iii) when randomness is considered, we assume independent, isotropic Gaussian perturbations. Relaxing any of these conditions will require new mathematical machinery beyond our current framework.

Our findings have immediate practical implications. For structured point clouds where pairwise distances are analyzable (e.g., regular lattices, symmetric configurations), practitioners can compute or bound the grid-threshold clearance and verify exact-stability conditions. For large or generic point clouds, the bounded-change theorems provide worst-case guarantees that scale with local density, guiding expectations about robustness. The probabilistic bound in~\eqref{eq:prob-stability-bound} yields an explicit inequality relating permissible noise variance, clearance, and confidence level---but practitioners should verify that the union-bound prefactor does not render the guarantee vacuous for their application. For dynamic data with intermittent point changes, our linear insertion--deletion bound (Section~\ref{sec:point-churn-theory}) quantifies worst-case topological distortion, guiding tolerance settings for tracking algorithms and experimental design.

Looking ahead, several promising directions emerge. First, one could develop adaptive scale grids that adjust at each time step based on observed data geometry, optimizing the resolution--stability trade-off while maintaining verifiable clearance. Second, our probabilistic results could be extended to more realistic noise scenarios, such as directional (anisotropic) measurement errors or non-Gaussian (heavy-tailed) noise distributions. Third, tighter bounds for large biological datasets might exploit typical constraints---such as bounded cell density, spatial regularity, or localized turnover---to yield practical guarantees where our current union bounds are vacuous. Finally, systematic benchmarking of Crocker diagrams against more detailed but computationally intensive methods like vineyards would clarify runtime--accuracy trade-offs and help determine when each approach is preferable in practice.

By providing rigorous guarantees, clarifying their scope of applicability, and offering actionable diagnostics, we hope this work encourages wider and more confident use of Crocker diagrams across scientific disciplines---from developmental biology to materials science---that grapple with evolving geometric structure.

\section*{Acknowledgments}
I am grateful to Lori Ziegelmeier, whose research inspired this project. Henry Adams and Maria-Veronica Ciocanel provided invaluable feedback on this manuscript. I used ChatGPT (OpenAI; GPT-4.5, May 11–15, 2025; GPT-5.2, Jan. 20–26, 2026) for limited assistance with language-level editing, typically via prompts such as ``tighten the prose in this paragraph'' or ``suggest clearer phrasing.'' I also used ChatGPT to generate initial drafts of TikZ code for schematic figures based on detailed descriptions of the intended geometry and layout. All such figure code was reviewed, corrected, and, where necessary, modified by hand. The manuscript was additionally processed with Refine.ink (January 25, 2026) solely for consistency checks. All mathematical and scientific content was conceived, derived, and verified by me, and I take full responsibility for the correctness and integrity of the work.

\bibliographystyle{siamplain.bst}
\bibliography{bibliography.bib}

@article{Adams2017,
  author  = {Henry Adams and Tegan Emerson and Michael Kirby and Rachel Neville and Chris Peterson and Patrick Shipman and Sofya Chepushtanova and Eric Hanson and Francis Motta and Lori Ziegelmeier},
  title   = {Persistence images: a stable vector representation of persistent homology},
  journal = {J. Mach. Learn. Res.},
  volume  = {18},
  number  = {8},
  pages   = {1--35},
  year    = {2017}
}

@article{Aktas2019,
  author  = {Mehmet E. Aktas and Esra Akbas and Ahmed El Fatmaoui},
  title   = {Persistence homology of networks: methods and applications},
  journal = {Appl. Netw. Sci.},
  volume  = {4},
  pages   = {61},
  year    = {2019},
  doi     = {10.1007/s41109-019-0179-3}
}

@article{Betzig2006,
  author  = {Eric Betzig and George H. Patterson and Rachid Sougrat and O. W. Lindwasser and Scott Olenych and J. S. Bonifacino and Michael W. Davidson and Jennifer Lippincott-Schwartz and Harald F. Hess},
  title   = {Imaging intracellular fluorescent proteins at nanometre resolution},
  journal = {Science},
  year    = {2006},
  volume  = {313},
  number  = {5793},
  pages   = {1642--1645},
  doi     = {10.1126/science.1127344}
}

@article{Bonilla2020,
  author  = {Luis L. Bonilla and Ana Carpio and Cristina Trenado},
  title   = {Tracking collective cell motion by topological data analysis},
  journal = {PLoS Comput. Biol.},
  volume  = {16},
  number  = {12},
  pages   = {e1008407},
  year    = {2020},
  doi     = {10.1371/journal.pcbi.1008407}
}

@article{Bubenik2015,
  author  = {Peter Bubenik},
  title   = {Statistical topological data analysis using persistence landscapes},
  journal = {J. Mach. Learn. Res.},
  volume  = {16},
  pages   = {77--102},
  year    = {2015}
}

@article{Chenouard2014,
  author  = {Nicolas Chenouard and Ihor Smal and Fabrice de Chaumont and many others},
  title   = {Objective comparison of particle tracking methods},
  journal = {Nat. Methods},
  year    = {2014},
  volume  = {11},
  number  = {3},
  pages   = {281--289},
  doi     = {10.1038/nmeth.2808}
}

@inproceedings{CohenSteiner2006,
  author    = {David Cohen-Steiner and Herbert Edelsbrunner and Dmitriy Morozov},
  title     = {Vines and vineyards by updating persistence in linear time},
  booktitle = {Proc. 22nd Annu. Symp. Comput. Geom.},
  pages     = {119--126},
  year      = {2006},
  doi       = {10.1145/1137856.1137877}
}

@article{CohenSteiner2007,
  author  = {David Cohen-Steiner and Herbert Edelsbrunner and John Harer},
  title   = {Stability of persistence diagrams},
  journal = {Discrete Comput. Geom.},
  volume  = {37},
  number  = {1},
  pages   = {103--120},
  year    = {2007},
  doi     = {10.1007/s00454-006-1276-5}
}

@book{ConwaySloane1999,
  author    = {Conway, John H. and Sloane, Neil J. A.},
  title     = {Sphere Packings, Lattices and Groups},
  edition   = {3rd},
  series    = {Grundlehren der mathematischen Wissenschaften},
  volume    = {290},
  publisher = {Springer},
  address   = {New York},
  year      = {1999},
  isbn      = {978-0-387-98585-5},
  doi       = {10.1007/978-1-4757-6568-7}
}

@article{Dorrington2025,
  author  = {Joshua Dorrington and Sushovan Majhi and Atish Mitra and James Moukheiber and Demi Qin and Jacob Sriraman and Kristian Str{\o}mmen},
  title   = {Topology of the polar vortex and Montana weather},
  journal = {arXiv},
  eprint  = {2503.20743},
  year    = {2025}
}

@article{GideaKatz2018,
  author  = {Marian Gidea and Yuri Katz},
  title   = {Topological data analysis of financial time series: landscapes of crashes},
  journal = {Physica A},
  volume  = {491},
  pages   = {820--834},
  year    = {2018},
  doi     = {10.1016/j.physa.2017.09.028}
}

@article{Giusti2015,
  author  = {Chad Giusti and Eva Pastalkova and Carina Curto and Vladimir Itskov},
  title   = {Clique topology reveals intrinsic geometric structure in neural correlations},
  journal = {Proc. Natl. Acad. Sci. U.S.A.},
  volume  = {112},
  number  = {44},
  pages   = {13455--13460},
  year    = {2015},
  doi     = {10.1073/pnas.1506407112}
}

@article{Guzel2022,
  author  = {Ismail G{\"u}zel and Elizabeth Munch and Firas A. Khasawneh},
  title   = {Detecting bifurcations in dynamical systems with {CROCKER} plots},
  journal = {Chaos},
  volume  = {32},
  number  = {9},
  pages   = {093111},
  year    = {2022},
  doi     = {10.1063/5.0102421}
}

@article{Hiraoka2016,
  author  = {Yasuaki Hiraoka and Tomas Nakamura and Akihiko Hirata and Emanuele G. E. Hiraoka and others},
  title   = {Hierarchical structures of amorphous solids characterised by persistent homology},
  journal = {Proc. Natl. Acad. Sci. U.S.A.},
  volume  = {113},
  number  = {26},
  pages   = {7035--7040},
  year    = {2016},
  doi     = {10.1073/pnas.1520877113}
}

@article{Holme2023,
  author  = {Holme, B{\o}rge and Bj{\o}rnerud, Birgitte and Pedersen, Nina Marie and
             Rodriguez de la Ballina, Laura and Wesche, J{\o}rgen and Haugsten, Ellen Margrethe},
  title   = {Automated tracking of cell migration in phase contrast images with {CellTraxx}},
  journal = {Sci. Rep.},
  year    = {2023},
  volume  = {13},
  pages   = {22982},
  doi     = {10.1038/s41598-023-50227-9}
}

@article{Laurent2000,
  author    = {B. Laurent and P. Massart},
  title     = {Adaptive estimation of a quadratic functional by model selection},
  journal   = {Ann. Statist.},
  volume    = {28},
  number    = {5},
  pages     = {1302--1338},
  year      = {2000},
  doi       = {10.1214/aos/1015957395}
}

@article{Nguyen2024,
  author  = {Kyle C. Nguyen and Carter D. Jameson and Scott A. Baldwin and John T. Nardini and Ralph C. Smith and Jason M. Haugh and Kevin B. Flores},
  title   = {Quantifying collective motion patterns in mesenchymal cell populations using topological data analysis and agent-based modelling},
  journal = {Math. Biosci.},
  volume  = {370},
  pages   = {109158},
  year    = {2024},
  doi     = {10.1016/j.mbs.2024.109158}
}

@article{Park2015,
  author  = {Park, Jin-Ah and Kim, Jae Hun and Bi, Dapeng and Mitchel, Jennifer A. and
             Taheri Qazvini, Nader and Tantisira, Kelan and Park, Chan Young and
             McGill, Maureen and Kim, Sae-Hoon and Gweon, Bomi and Notbohm, Jacob and
             Steward, Robert Jr. and Burger, Stephanie and Randell, Scott H. and
             Kho, Alvin T. and Tambe, Dhananjay T. and Hardin, Corey and Shore, Stephanie A. and
             Israel, Elliot and Weitz, David A. and Tschumperlin, Daniel J. and
             Henske, Elizabeth P. and Weiss, Scott T. and Manning, M. Lisa and
             Butler, James P. and Drazen, Jeffrey M. and Fredberg, Jeffrey J.},
  title   = {Unjamming and cell shape in the asthmatic airway epithelium},
  journal = {Nat. Mater.},
  year    = {2015},
  volume  = {14},
  number  = {10},
  pages   = {1040--1048},
  doi     = {10.1038/nmat4357}
}

@article{Rorth2009,
  author  = {R{\o}rth, P.},
  title   = {Collective cell migration},
  journal = {Annu. Rev. Cell Dev. Biol.},
  year    = {2009},
  volume  = {25},
  pages   = {407--429},
  doi     = {10.1146/annurev.cellbio.042308.113231},
  url     = {https://doi.org/10.1146/annurev.cellbio.042308.113231}
}

@article{Sarder2006,
  author  = {Pinaki Sarder and Arye Nehorai},
  title   = {Deconvolution methods for 3-D fluorescence microscopy images},
  journal = {IEEE Signal Process. Mag.},
  year    = {2006},
  volume  = {23},
  number  = {3},
  pages   = {32--45},
  doi     = {10.1109/MSP.2006.1628876}
}

@article{Schermelleh2010,
  author  = {Lothar Schermelleh and Rainer Heintzmann and Heinrich Leonhardt},
  title   = {A guide to super-resolution fluorescence microscopy},
  journal = {J. Cell Biol.},
  year    = {2010},
  volume  = {190},
  number  = {2},
  pages   = {165--175},
  doi     = {10.1083/jcb.201002018}
}

@incollection{Sibarita2005,
  author    = {Jean-Baptiste Sibarita},
  title     = {Deconvolution microscopy},
  booktitle = {Microscopy Techniques},
  series    = {Methods in Cell Biology},
  volume    = {75},
  pages     = {457--465},
  publisher = {Academic Press},
  year      = {2005},
  doi       = {10.1016/S0091-679X(05)75020-7}
}

@article{Somasundaram2021,
  author  = {Eashwar Somasundaram and Adam Litzler and Raoul Wadhwa and Rowan Barker-Clarke and Jacob Scott},
  title   = {Persistent homology of tumour {CT} scans is associated with survival in lung cancer},
  journal = {Med. Phys.},
  volume  = {48},
  number  = {11},
  pages   = {7043--7051},
  year    = {2021},
  doi     = {10.1002/mp.15255}
}

@article{Tymochko2020,
  author  = {Sarah Tymochko and Elizabeth Munch and Jason Dunion and Kristen Corbosiero and Ryan Torn},
  title   = {Using persistent homology to quantify a diurnal cycle in hurricanes},
  journal = {Pattern Recognit. Lett.},
  volume  = {133},
  pages   = {137--143},
  year    = {2020},
  doi     = {10.1016/j.patrec.2020.02.022}
}

@article{Ulman2017,
  author    = {Vladim{\'i}r Ulman and Martin Ma{\v{s}}ka and Kevin E.\,G. Magnusson
               and Olaf Ronneberger and Christian Haubold and Nicolas Harder
               and Petr Matula and Pavel Matula and David Svoboda
               and et~al.},
  title     = {An objective comparison of cell-tracking algorithms},
  journal   = {Nat. Methods},
  year      = {2017},
  volume    = {14},
  number    = {12},
  pages     = {1141--1152},
  month     = dec,
  doi       = {10.1038/nmeth.4473},
  url       = {https://doi.org/10.1038/nmeth.4473}
}

@article{Ulmer2019,
  author  = {M. Ulmer and Lori Ziegelmeier and Chad M. Topaz},
  title   = {A topological approach to selecting models of biological experiments},
  journal = {PLoS One},
  volume  = {14},
  number  = {3},
  pages   = {e0213679},
  year    = {2019},
  doi     = {10.1371/journal.pone.0213679}
}

@book{Vershynin2018,
  title     = {High-dimensional probability: an introduction with applications in data science},
  author    = {Vershynin, Roman},
  year      = {2018},
  series    = {Cambridge Series in Statistical and Probabilistic Mathematics},
  publisher = {Cambridge University Press},
  address   = {Cambridge, UK},
  isbn      = {9781108415197}
}

@article{XiaWei2014,
  author  = {Kelin Xia and Guo-Wei Wei},
  title   = {Persistent homology analysis of protein structure, flexibility, and folding},
  journal = {Int. J. Numer. Methods Biomed. Eng.},
  volume  = {30},
  number  = {8},
  pages   = {814--844},
  year    = {2014},
  doi     = {10.1002/cnm.2655}
}

\end{document}